\newtheorem{rk}{Remark}
\newtheorem{ass}{Assumption}
\newtheorem{theo}{Theorem}
\newtheorem{cor}{Corollary}
\newtheorem{lem}{Lemma}
\newtheorem{definition}{Definition}
\newtheorem{prty}{Property}
\newcommand{\Bcal}{\mathcal{B}}
\newcommand{\Ncal}{\mathcal{N}}
\newcommand{\Scal}{\mathcal{S}}
\newcommand{\Xcal}{\mathcal{X}}
\newcommand{\Zcal}{\mathcal{Z}}
\newcommand{\Rset}{\mathbb{R}}
\newcommand{\Nset}{\mathbb{N}}
\newcommand{\Trans}{\scriptscriptstyle\top}
\newcommand{\prox}{\operatorname{prox}}
\DeclareMathOperator*{\minimise}{minimise}
\DeclareMathOperator*{\argm}{argmin}
\DeclareMathOperator*{\infim}{inf}
\title{\LARGE \bf A Parametric Multi-Convex Splitting Technique with Application to Real-Time NMPC}
\author{Jean-Hubert Hours~~~~~~Colin N. Jones
\thanks{ Jean-Hubert Hours and Colin N. Jones are with the Laboratoire d'Automatique,~\'Ecole Polytechnique F\'ed\'erale de Lausanne,~Switzerland. 
        {\tt\small \{jean-hubert.hours, colin.jones\}@epfl.ch}}%
}
\begin{document}
\maketitle
\thispagestyle{empty}
\pagestyle{empty}

\begin{abstract}
A novel splitting scheme to solve parametric multi-convex programs is presented. It consists of a fixed number of proximal alternating minimisations and a dual update per time step, which makes it attractive in a real-time NMPC framework and for distributed computing environments. Assuming that the parametric program is semi-algebraic and that its KKT points are strongly regular, a contraction estimate is derived and it is proven that the sub-optimality error remains stable if two key parameters are tuned properly. Efficacy of the method is demonstrated by solving a bilinear NMPC problem to control a DC motor.
\end{abstract}
\section{Introduction}
\label{sec:intro}
\looseness-1The applicability of NMPC to fast and complex dynamics is hampered by the fact that a nonlinear program (NLP), which is generally non-convex, is to be solved at every sampling time. Solving an NLP to full accuracy is not tractable when the system's sampling frequency is high, which is the case for many mechanical or electrical systems. This difficulty is enhanced when dealing with distributed systems, as they typically lead to large-scale NLPs. Several techniques have been proposed in order to improve the computational efficacy of NMPC schemes by avoiding solving with more accuracy than needed. Most of them rely on the parametric nature of the NLP \cite{zav2009,zav2010,diehl2005}, which has a fixed structure with a time-dependent state or noise estimate. All existing approaches to real-time NMPC are based on Newton type methods, which benefit from local quadratic convergence, but are not easily applicable in a distributed context. Moreover, when inequality constraints are present, a quadratic program (QP) is solved, which is generally performed via active-set methods so as to fully exploit the parametric nature of the NMPC problem by warm-starting \cite{diehl2008}. Unfortunately, the number of iterations required by active-set strategies is hard to predict and thus certification is a difficult problem.\\
\looseness-1In this paper, a parametric optimisation scheme based on augmented Lagrangian \cite{conn1996,bert1982} is proposed. In an NMPC context, such an alternative has already been explored in \cite{zav2010}, which has shown that augmented Lagrangian methods have a good potential for scalable optimisation \cite{zav2013}, partly because they allow one to apply iterative linear algebra and to detect active-sets changes efficiently. In \cite{zav2010}, the theoretical analysis relies on the fact that the primal quadratic program is solved to a given accuracy and the influence of the number of iterations of the suggested projected successive over-relaxation (PSOR) method on the sub-optimality error is not examined. Moreover, the efficacy of the proposed algorithm strongly relies on the fact that the current iterate is close to the optimal solution, as this is required to guarantee convexity of the quadratic program. Finally, due to the dual update, the tracking error is only first-order in the parameter difference. Therefore, the augmented Lagrangian approach may not be very competitive as a fast local method for NMPC, compared to Newton strategies. Yet it is an interesting direction for parallel computing environments or in a distributed NMPC context, assuming that one is able to decompose the evaluation of the primal iterates, as shown in \cite{boyd2010} for convex problems.\\   
\looseness-1The central idea of our algorithm is to apply a truncated version of the proximal alternating minimisation method in \cite{att2010} so as to solve the primal (non-convex) problem approximately. Alternating minimisation strategies are known to lead to `easily' solvable sub-problems, which can be parallelised under some assumptions on the coupling, and are well-suited to distributed computing platforms \cite{bert1997}. At the expense of a few assumptions on the parametric program, we provide an analysis of the stability of the sub-optimality error, in which the parameter difference, the number of primal iterations and the penalty parameter are related. In particular, we give new insights on how the penalty parameter and the number of primal iterations should be tuned in order to ensure boundedness of the tracking error, which is a key point in a real-time NMPC context. In the end, the proposed framework can also address more general problem formulations than \cite{zav2010}, where the PSOR strategy is restricted to quadratic objectives subject to non-negativity constraints.\\
In Section \ref{sec:multi_conv_prg}, the parametric optimisation scheme is presented. In Section \ref{sec:theo_tools}, some key theoretical tools such as Robinson's strong regularity and the Kurdyka-Lojasiewicz property are introduced. Then, in Section \ref{sec:stab_ana}, conditions ensuring stability of the tracking error are derived. In Section \ref{sec:app_rt_nmpc}, the applicability of the parametric optimisation scheme to real-time NMPC is investigated along with basic computational aspects. Finally, the conditions derived in Section \ref{sec:stab_ana} are verified on a numerical example, which consists in controlling the speed of a DC motor to track a piecewise constant reference. An analysis of the evolution of the tracking error as a function of the sampling period for a \textit{fixed computational power} is also presented. 
\section{Background definitions}
\label{sec:not}
\begin{definition}[Critical point]
\label{def:crit_pt}
Let $f$ be a proper lower semicontinuous function. A necessary condition for $x^\ast$ to be a minimiser of $f$ is that 
\begin{align}
\label{eq:crit_pt_def}
0\in{\partial}f(x^\ast)\enspace,
\end{align}
where ${\partial}f(x^\ast)$ is the sub-differential of $f$ at $x^\ast$ \cite{rock2009}. Points satisfying \eqref{eq:crit_pt_def} are called \textit{critical points}. 
\end{definition}
\begin{definition}[Normal cone to a convex set]
Let ${\Omega}$ be a convex set in ${{\Rset}^{n}}$ and ${\bar{x}\in{\Omega}}$. The \textit{normal cone} to ${\Omega}$ at $\bar{x}$ is the set
\begin{align}
{\Ncal}_{\Omega}(\bar{x}):=\left\{v\in\Rset^n~\Big|~\forall{}x\in\Omega,~v^{\Trans}(x-\bar{x})\leq{}0\right\}\enspace.
\end{align}
\end{definition}
The indicator function of a closed subset ${\Omega}$ of ${\Rset^n}$ is denoted by ${\iota_\Omega}$ and is defined as
\begin{align}
\iota_\Omega(x)=\begin{cases}0&\mbox{if } x\in\Omega\\
+\infty&\mbox{if }x\notin\Omega\enspace.\end{cases} 
\end{align}
\begin{lem}[Sub-differential of indicator function \cite{rock2009}]
Given a convex set $\Omega$, for all $x\in\Omega$,  
\begin{align}
\partial\iota_\Omega(x)=\Ncal_\Omega(x)\enspace. 
\end{align}
\end{lem}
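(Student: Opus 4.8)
The plan is to exploit the convexity of $\iota_\Omega$ so that the general subdifferential appearing in Definition \ref{def:crit_pt} reduces to the familiar convex subgradient inequality, and then to read off the normal cone condition directly. Since $\Omega$ is a closed convex set, $\iota_\Omega$ is a proper, lower semicontinuous, convex function, and for such functions the limiting subdifferential of \cite{rock2009} coincides with the convex subdifferential. Hence a vector $v$ belongs to $\partial\iota_\Omega(x)$ if and only if it satisfies the subgradient inequality
\begin{align}
\iota_\Omega(y)\geq\iota_\Omega(x)+v^{\Trans}(y-x)\quad\text{for all }y\in\Rset^n\enspace.
\end{align}

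Next I would use the hypothesis $x\in\Omega$, so that $\iota_\Omega(x)=0$, and split the verification of this inequality into two cases according to whether $y$ lies in $\Omega$. When $y\notin\Omega$ the left-hand side equals $+\infty$ and the inequality holds trivially, placing no restriction on $v$. When $y\in\Omega$ the inequality becomes $0\geq v^{\Trans}(y-x)$, that is $v^{\Trans}(y-x)\leq 0$. Therefore $v\in\partial\iota_\Omega(x)$ if and only if $v^{\Trans}(x'-x)\leq 0$ for every $x'\in\Omega$, which is exactly the defining condition of $\Ncal_\Omega(x)$; this yields the claimed equality.

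The argument is essentially routine and I do not anticipate a genuine obstacle. The only point that requires care is the opening reduction: the subdifferential in Definition \ref{def:crit_pt} is taken in the general sense of \cite{rock2009}, so one must first invoke the standard fact that for a proper lower semicontinuous convex function this object agrees with the convex subdifferential before the subgradient inequality may be used. Once that identification is made, the two-case split on the position of $y$ relative to $\Omega$ delivers both inclusions simultaneously.
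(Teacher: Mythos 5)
Your proof is correct. The paper gives no proof of this lemma at all — it is stated as a known fact with a citation to \cite{rock2009} — and your argument (identify the limiting subdifferential of a proper, lower semicontinuous, convex function with the convex subdifferential, then apply the subgradient inequality and split on whether $y\in\Omega$) is exactly the standard derivation found in that reference, so there is nothing substantive to compare.
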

The distance of a point $x\in\Rset^n$ to a subset $\Sigma$ of $\Rset^n$ is defined by 
\begin{align}
d(x,\Sigma):=\infim_{y\in\Sigma}\big\|x-y\big\|_2\enspace.
\end{align}
A function $h:\big(z_1,\ldots,z_P\big)\mapsto h\big(z_1,\ldots,z_P\big)$ is said to be \textit{multi-convex} if for all $i\in\left\{1,\ldots,P\right\}$, by fixing variables $z_j$ with $j\ne i$, the resulting function is convex in $z_i$. The open ball with center $x$ and radius $r$ is denoted by $\Bcal\left(x,r\right)$.
\section{Solving time-dependent multi-convex parametric programs}
\label{sec:multi_conv_prg}
\subsection{Problem formulation}
\label{subsec:pb_form}
We consider multi-convex parametric programs
\begin{align}
\label{eq:mu_conv_pb}
&\minimise~f(z_1,\ldots,z_P)\\
&\text{s.t.}~~~~g(z_1,\ldots,z_P,s_k)=0\nonumber\\
&~~~~z_i\in\Zcal_i,~\forall i\in\left\{1,\ldots,P\right\}\enspace,\nonumber
\end{align}
where $f$ is multi-convex in $z:=\left(z_1^{\Trans},\ldots,z_P^{\Trans}\right)^{\Trans}\in\Rset^{n_z}$ with $z_i\in\Rset^{n_i}$ and $n_z:=\sum_{i=1}^Pn_i$, $g(\cdot,s_k)$ is a multi-linear function mapping $\Rset^{n_z}$ into $\Rset^m$, the constraint sets $\Zcal_i$ are compact convex and $k$ is a time-index.~The time-dependent parameter $s_k$ is assumed to lie in a subset $\Scal\subset\Rset^p$. Critical points of the parametric nonlinear program \eqref{eq:mu_conv_pb} are denoted by $z^\ast_k$ or $z^\ast(s_k)$ without distinction.
\begin{ass}[Smoothness and semi-algebraicity]
\label{ass:semi_alg}
The function $f$ is twice continuously differentiable and semi-algebraic.
\end{ass}
\begin{rk}
Note that $g(\cdot,s)$ satisfies Assumption \ref{ass:semi_alg}, as it is multi-linear.
\end{rk}
\subsection{A truncated multi-convex splitting scheme}
\label{subsec:desc_algo}
The basic idea of the proposed algorithm is to track time-dependent local optima $z^{\ast}_k$ of \eqref{eq:mu_conv_pb} by approximately computing saddle points of the augmented Lagrangian
\begin{align}
\label{eq:def_al}
L_{\rho}\left(z,\mu,s_k\right):=f\left(z\right)+\left(\mu+\frac{\rho}{2}g\left(z,s_k\right)\right)^{\Trans}g\left(z,s_k\right)
\end{align}
subject to $z\in\Zcal$, where $\Zcal:=\Zcal_1\times\ldots\times\Zcal_P$, $\mu\in\Rset^m$ is a multiplier associated with the equality constraint $g(z,s_k)=0$ and $\rho>0$ is a well-chosen fixed penalty parameter, as explained in the remainder.

\begin{algorithm}[h!]
	\caption{\label{algo:mu_conv_al}Optimality tracking splitting algorithm}
	\begin{algorithmic}
		\State \textbf{Input:} Suboptimal primal-dual solution $\left(\bar{z}_k^{\Trans},\bar{\mu}_k^{\Trans}\right)^{\Trans}$, parameter $s_{k+1}$, augmented Lagrangian~${L_{\rho}\left(\cdot,\bar{\mu}_k,s_{k+1}\right)}$.
		\State $z^{(0)}\leftarrow\bar{z}_k$
		\For{$l=0\ldots M-1$}
		\For{$i=1\ldots P$}
		\State$z_i^{(l+1)}\leftarrow\underset{z_i\in\Zcal_i}{\argm}~L_{\rho}\Big(z_1^{(l+1)},\ldots,z_{i-1}^{(l+1)},z_i,$\par
							~~~~~~~~~~~~~~~~~~~~~~~~~~~~~~$z_{i+1}^{(l)},\ldots,z_P^{(l)},\bar{\mu}_k,s_{k+1}\Big)$\par
				    			~~~~~~~~~~~~~~~~~~~~~~~~~~~~~~$+\displaystyle\frac{\alpha_i}{2}\left\|z_i-z_i^{(l)}\right\|^2_2$
		\EndFor
		\EndFor
		\State $\bar{z}_{k+1}\leftarrow z^{(M)}$ ; $\bar{\mu}_{k+1}\leftarrow\bar{\mu}_k+{\rho}g\left(\bar{z}_{k+1},s_{k+1}\right)$  	
		\end{algorithmic}
\end{algorithm}
In Algorithm \ref{algo:mu_conv_al} below, the coefficients $\alpha_i>0$ are regularisation parameters, which are chosen to be very small in practice. Algorithm \ref{algo:mu_conv_al} builds a suboptimal solution $\bar{z}_{k+1}$ by applying $M$ iterations of the proximal alternating minimisation method proposed in \cite{att2010}, to evaluate the primal iterates approximately. The dual variable $\mu$ is then updated in a (non-smooth) gradient ascent fashion.
\begin{rk}
Note that each of the subproblems in Algorithm \ref{algo:mu_conv_al} is uniquely solvable, as 
\begin{align}
L_\rho\big(z_1^{(l+1)},\ldots,z_{i-1}^{(l+1)},z_i,z_{i+1}^{(l)},\ldots,z_P^{(l)},&\bar{\mu}_k,s_{k+1}\big)\nonumber\\
&+\frac{\alpha_i}{2}\left\|z_i-z_i^{(l)}\right\|_2^2
\end{align}
is strongly convex and $\Zcal_i$ is convex. 
\end{rk}
\section{Theoretical tools}
\label{sec:theo_tools}
In order to analyse the truncated augmented Lagrangian scheme, we use the concept of generalised equation, which has been introduced in real-time NMPC by \cite{zav2010}.~The stability analysis of the sub-optimality error is also based on the convergence rate of the proximal Gauss-Seidel method in Algorithm \ref{algo:mu_conv_al}.
\subsection{Parametric generalised equations}
\label{subsec:prm_ge}
Critical points $w^{\ast}\left(s_k\right)$ of the parametric nonlinear program \eqref{eq:mu_conv_pb} satisfy the generalised equation 
\begin{align}
\label{eq:ge_1}
0\in F\left(w,s_k\right)+\Ncal_{\Zcal\times\Rset^m}\left(w\right),
\end{align}
where 
\begin{align}
F\left(w,s_k\right):=\begin{bmatrix}
	\nabla_zf\left(z\right)+\nabla_zg\left(z,s_k\right)^{\Trans}\mu\\
	g\left(z,s_k\right)
\end{bmatrix}\enspace,
\end{align}
and $w=\left(z^{\Trans},\mu^{\Trans}\right)^{\Trans}$.\\     
A central concept of our analysis is the \textit{strong regularity} of the generalised equation \eqref{eq:ge_1}.~As addressed in the sequel, strong regularity provides a measure of how close two time-dependent parameters need to be in order to guarantee recursive stability of the sub-optimality error.
\begin{definition}[Strong regularity, \cite{robin1980}]
Given a closed convex set $C$ in $\Rset^n$ and a differentiable mapping $F:\Rset^n\rightarrow\Rset^n$, a generalised equation $0\in F(x)+\Ncal_C\big(x\big)$ is said to be strongly regular at a solution $x^\ast\in C$ if there exists radii $\eta>0$ and $\kappa>0$ such that for all $r\in\Bcal\big(0,\eta\big)$, there exists a unique $x\in\Bcal\big(x^\ast,\kappa\big)$ such that
\begin{align}
r\in F(x^\ast)+{\nabla}F(x^\ast)(x-x^\ast)+\Ncal_C\big(x\big)
\end{align}
and the inverse mapping from $\Bcal\big(0,\eta\big)$ to $\Bcal\big(x^\ast,\kappa\big)$ is Lipschitz continuous. 
\end{definition}
\begin{ass}[Strong regularity of \eqref{eq:ge_1}]
\label{ass:str_reg}
For all time instants $k$ and associated parameters $s_k\in\Scal$, the generalised equation \eqref{eq:ge_1} is strongly regular at a solution $w^{\ast}\left(s_k\right)$ in the sense of \cite{robin1980}. 
\end{ass}
\begin{rk}
Strong regularity of a solution $w^{\ast}\left(s_k\right)$ to \eqref{eq:ge_1} is guaranteed by the \textit{strong second-order sufficient optimality condition} and the standard linear independence constraints qualification \cite{robin1980}. By strengthening the usual second-order sufficient optimality condition, strong regularity does not require strict complementarity slackness to be satisfied, contrary to Fiacco's theorem \cite{fiac1976}.
\end{rk}
From Assumption \ref{ass:str_reg}, the following Lemma can be proven \cite{robin1980}, guaranteeing local Lipschitz continuity of the primal-dual solution to \eqref{eq:ge_1}. 
\begin{lem}[Theorem $2.1$ in \cite{robin1980}]
\label{lem:str_reg}
There exists radii $\delta_A>0$ and $r_A>0$ such that for all $k\in\Nset$, for all $s\in\Bcal\left(s_k,r_A\right)$, there exists a unique 
$w^{\ast}\left(s\right)\in\Bcal\left(w^{\ast}_k,\delta_A\right)$ such that 
\begin{align}
0\in F(w^{\ast}(s),s)+\Ncal_{\Zcal\times\Rset^m}(w^{\ast}(s))
\end{align}
and for all $s,s'\in\Bcal(s_k,r_A)$,
\begin{align}
\left\|w^{\ast}(s)-w^{\ast}(s')\right\|_2\leq\lambda_A\left\|F\left(w^{\ast}(s'),s\right)-F\left(w^{\ast}(s'),s'\right)\right\|_2,
\end{align}
where $\lambda_A>0$ is a Lipschitz constant associated with \eqref{eq:ge_1}.
\end{lem}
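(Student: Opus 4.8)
The plan is to reproduce Robinson's implicit-function-type argument for generalised equations: I would treat the parametric equation \eqref{eq:ge_1} as a perturbation of its linearisation at the reference solution $w^\ast_k$ and close the argument with a Banach contraction. The two ingredients are the single-valued Lipschitz solution map of the linearised equation (supplied by Assumption \ref{ass:str_reg}) and the smoothness of $F$ inherited from Assumption \ref{ass:semi_alg}.

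First I would unpack strong regularity at $w^\ast_k$: it furnishes, for the linearised generalised equation
\begin{align}
r\in F(w^\ast_k,s_k)+\nabla_w F(w^\ast_k,s_k)(w-w^\ast_k)+\Ncal_{\Zcal\times\Rset^m}(w),
\end{align}
a single-valued map $\sigma:\Bcal(0,\eta)\rightarrow\Bcal(w^\ast_k,\kappa)$ that is Lipschitz with some constant $\lambda>0$ and satisfies $\sigma(0)=w^\ast_k$. Next I would introduce the residual
\begin{align}
R(w,s):=F(w^\ast_k,s_k)+\nabla_w F(w^\ast_k,s_k)(w-w^\ast_k)-F(w,s),
\end{align}
and verify the elementary equivalence: $w$ solves $0\in F(w,s)+\Ncal_{\Zcal\times\Rset^m}(w)$ if and only if $R(w,s)$ lies in the right-hand side of the linearised inclusion at $w$, i.e.\ if and only if $w=\sigma(R(w,s))$. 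This recasts \eqref{eq:ge_1} as a fixed-point problem in $w$.

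The contraction step exploits that $f$ is $C^2$ and $g(\cdot,s)$ is multilinear, so $F(\cdot,s)$ is $C^1$ with locally Lipschitz Jacobian. Since $R(w^\ast_k,s_k)=0$, a mean-value estimate on $R(w,s)-R(w',s)=\nabla_w F(w^\ast_k,s_k)(w-w')-\big(F(w,s)-F(w',s)\big)$ shows that $R(\cdot,s)$ is Lipschitz in $w$ with a constant $L$ that tends to $0$ as the neighbourhood radii shrink. Shrinking $r_A$ keeps $R(w,s)$ inside $\Bcal(0,\eta)$ and, by $\sigma(0)=w^\ast_k$ and the Lipschitz bound on $\sigma$, keeps $\sigma(R(w,s))$ inside $\Bcal(w^\ast_k,\delta_A)$. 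Choosing $\delta_A$ small enough that $\lambda L<1$ makes $w\mapsto\sigma(R(w,s))$ a self-contraction on $\Bcal(w^\ast_k,\delta_A)$ for each fixed $s\in\Bcal(s_k,r_A)$, and the Banach fixed-point theorem delivers the unique $w^\ast(s)$. Applying this at each $k$ gives radii a priori depending on $k$; uniform $\delta_A,r_A$ then follow from a standard continuity/compactness argument on the trajectory of reference solutions.

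For the Lipschitz estimate I would subtract the identities $w^\ast(s)=\sigma(R(w^\ast(s),s))$ and $w^\ast(s')=\sigma(R(w^\ast(s'),s'))$, apply the Lipschitz constant $\lambda$ of $\sigma$, and then split $R(w^\ast(s),s)-R(w^\ast(s'),s')$ by adding and subtracting $F(w^\ast(s'),s)$. One piece, $\nabla_w F(w^\ast_k,s_k)\big(w^\ast(s)-w^\ast(s')\big)-\big(F(w^\ast(s),s)-F(w^\ast(s'),s)\big)$, is bounded by $L\,\|w^\ast(s)-w^\ast(s')\|_2$ exactly as above and absorbed on the left; the remaining piece is the genuine parameter term $F(w^\ast(s'),s)-F(w^\ast(s'),s')$. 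Rearranging yields the claim with $\lambda_A:=\lambda/(1-\lambda L)$. I expect the main obstacle to be this last bookkeeping step, namely isolating the parameter dependence so the bound reads precisely $\|F(w^\ast(s'),s)-F(w^\ast(s'),s')\|_2$ evaluated at the common point $w^\ast(s')$, rather than a coarser $\|s-s'\|_2$ estimate; this constraint is what dictates where $R$ is linearised and which terms are folded into the contraction factor.
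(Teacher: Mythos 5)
Your proposal is correct, and it is essentially the paper's own approach: the paper gives no proof of this lemma at all, but invokes Robinson's Theorem~$2.1$ in \cite{robin1980} verbatim, whose proof is precisely the linearisation-plus-Banach-contraction scheme you reconstruct --- the Lipschitz single-valued solution map $\sigma$ of the linearised inclusion, the residual fixed-point reformulation $w=\sigma(R(w,s))$, and the final bookkeeping isolating $F(w^\ast(s'),s)-F(w^\ast(s'),s')$ so that $\lambda_A=\lambda/(1-\lambda L)$. The one place where you claim more than is available is the uniformity of $\delta_A$ and $r_A$ over $k$: the paper does not derive this either but assumes it ``without loss of generality'' in the remark following the lemma, whereas your proposed continuity/compactness argument would require compactness of $\Scal$ or of the trajectory of reference solutions, neither of which is assumed.
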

\begin{rk}
Without loss of generality, the radii $\delta_A$ and $r_A$ are assumed not to depend on the parameter $s_k$. 
\end{rk}
\begin{ass}
\label{ass:glob_lip_F}
There exists $\lambda_F>0$ such that for all $w\in\Zcal\times\Rset^m$,
\begin{align}
\label{eq:glob_lip_F}
\forall s,s'\in\Scal, \left\|F\left(w,s\right)-F\left(w,s'\right)\right\|_2\leq\lambda_F\left\|s-s'\right\|_2\enspace.
\end{align}
\end{ass}
Such an assumption is valid if, for instance, the parameter $s$ enters the equality constraint $g(z,s)=0$ linearly. In general, Assumption \ref{ass:glob_lip_F} could be replaced with a local Lipschitz continuity property, yet this would make the subsequent analysis dependent on the time instant $k$.  
\subsection{Kurdyka-Lojasiewicz property and convergence rate}
\label{subsec:kl_cv_rate}
The convergence properties of the proximal Gauss-Seidel scheme of Algorithm \ref{algo:mu_conv_al} have been analysed in the case of two alternations \cite{att2010} under fairly general assumptions, the main one being the Kurdyka-Lojasiewicz (KL) property. 
\begin{prty}[KL property]
\label{prop:kl}
A lower semi-continuous function $f$ satisfies the KL property at a point $x^\ast$ in its domain if there exists a neighbourhood $U$ of $x^\ast$, $\eta\in\left(0,+\infty\right]$ and $\phi:\left[0,\eta\right)\rightarrow\Rset_+$ such that $\phi(0)=0$, $\phi$ is $C^1$ on $\left(0,\eta\right)$ with $\phi'>0$ and 
\begin{align}
\phi'\left(f(x)-f(x^\ast)\right)d\left(0,\partial f(x)\right)\geq1\enspace,
\end{align}
for all $x\in U\cap\left\{f\left(x^\ast\right)<f\left(x\right)<f\left(x^\ast\right)+\eta\right\}$.
\end{prty}
Given a semi-algebraic function $L:\Rset^n\rightarrow\Rset\cup\left\{+\infty\right\}$, it can actually be shown that $L$ satisfies the KL property at a given critical point $x^\ast$ with $\phi(t)=ct^{1-\theta}$ \cite{bolte2007}, that is there exists $\delta>0$, $c>0$ and $\theta\in\left[0,1\right)$ such that for all $x\in\Bcal\left(x^\ast,\delta\right)\cap\left\{x\in\Rset^n~\big|~L(x)>L(x^{\ast})\right\}$,  
\begin{align}
\label{eq:kl}
d\left(0,\partial L(x)\right)\geq c\left(L(x)-L(x^{\ast})\right)^\theta\enspace,
\end{align}
where $\theta$ is taken as the smallest possible exponent satisfying \eqref{eq:kl}. The parameter $\theta$ can be seen as a shape parameter of the graph of $L$ around a critical point $x^\ast$. When $\theta$ is close to $0$, the graph is sharp at $x^\ast$. When $\theta$ is close to $1$, the graph is flat around $x^\ast$. 
\begin{ass}
\label{ass:uni_kl}
The augmented Lagrangian \eqref{eq:def_al} satisfies the KL property for all~$\mu\in\Rset^m$ and~$s\in\Scal$ with Lojasiewicz exponents~$\theta\left(\mu,s\right)\in\left(\nicefrac{1}{2},1\right)$ and radius $\delta>0$ at its critical points.~The exponents~$\theta\left(\mu,s\right)$ can be upper bounded by~$\hat{\theta}\in\left(\nicefrac{1}{2},1\right)$. 
\end{ass} 
\begin{rk}
Such an assumption is not unreasonable.~It can be proven that for real analytic functions, the exponent~$\theta$ lies within~$\left[\nicefrac[]{1}{2},1\right)$ \cite{loja1963}.~Moreover, for multivariate polynomials of degree higher than two, such as~$L_{\rho}\left(\cdot,\mu,s\right)$, an upper bound on $\theta$ can be computed, which depends only on the number of variables and the degree~\cite{acunto2005}. In many cases, the radius $\delta$ is large. For instance, in the case of strongly convex functions, $\delta=+\infty$.    
\end{rk}
The following Lemma is a trivial extension of the result of \cite{att2010} to the multi-stage case.
\begin{lem}[Theorem $3.2$ in \cite{att2010}]
\label{lem:bcd_cv}
Assuming that $M=\infty$, the sequence $\left\{z^{(l)}\right\}$ generated by the inner loop of Algorithm \ref{algo:mu_conv_al} converges to a critical point $z^\infty\left(\bar{\mu}_k,s_{k+1}\right)$ of $L_\rho\left(\cdot,\bar{\mu}_k,s_{k+1}\right)+\iota_\Zcal\left(\cdot\right)$.
\end{lem}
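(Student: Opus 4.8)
The plan is to verify, for the objective $\Psi(\cdot) := L_\rho(\cdot,\bar{\mu}_k,s_{k+1}) + \iota_\Zcal(\cdot)$, the three ingredients of the abstract Kurdyka--Lojasiewicz convergence scheme underlying \cite{att2010}: a sufficient decrease of $\Psi$ along the iterates, a relative-error (subgradient) bound, and the KL property, the last of which is already granted by Assumption \ref{ass:uni_kl}. Once these hold, the standard argument yields that $\{z^{(l)}\}$ has finite length and converges to a single critical point of $\Psi$. Two structural facts are used throughout: the iterates remain in the compact convex set $\Zcal$, on which $\Psi$ is bounded below; and $\nabla_z L_\rho(\cdot,\bar{\mu}_k,s_{k+1})$ is Lipschitz continuous on $\Zcal$ with some constant $\ell>0$, since $f$ is $C^2$ (Assumption \ref{ass:semi_alg}) and $g$ is multi-linear, hence polynomial.

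\emph{Sufficient decrease.} Each block update minimises a strongly convex proximal subproblem over $\Zcal_i$. Comparing the optimal value at $z_i^{(l+1)}$ with the value at the feasible point $z_i^{(l)}$, at which the proximal term vanishes, bounds the decrease of $L_\rho$ obtained from updating block $i$ below by $\tfrac{\alpha_i}{2}\|z_i^{(l+1)}-z_i^{(l)}\|_2^2$. Summing these inequalities over $i=1,\ldots,P$ within one outer sweep, the intermediate values cancel in Gauss--Seidel fashion, and with $\alpha:=\min_i\alpha_i>0$ one obtains
\begin{align}
\Psi\big(z^{(l)}\big)-\Psi\big(z^{(l+1)}\big)\geq\frac{\alpha}{2}\big\|z^{(l+1)}-z^{(l)}\big\|_2^2\enspace.
\end{align}
Since $\{\Psi(z^{(l)})\}$ is nonincreasing and bounded below it converges, and summing over $l$ gives $\sum_l\|z^{(l+1)}-z^{(l)}\|_2^2<\infty$, so the step lengths tend to zero.

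\emph{Subgradient bound.} The first-order optimality condition of the $i$-th subproblem links the gradient $\nabla_{z_i}L_\rho$ evaluated at the intermediate iterate (with trailing blocks still at their old values), the proximal correction $\alpha_i(z_i^{(l+1)}-z_i^{(l)})$, and the normal cone $\Ncal_{\Zcal_i}(z_i^{(l+1)})$. By the sum rule and the earlier lemma on the subdifferential of an indicator, $\partial\Psi(z^{(l+1)})=\nabla_zL_\rho(z^{(l+1)},\bar{\mu}_k,s_{k+1})+\Ncal_\Zcal(z^{(l+1)})$. Correcting each block gradient from its intermediate evaluation point to the fully updated iterate $z^{(l+1)}$ costs at most $\ell$ times the norm of the not-yet-updated tail, itself bounded by $\|z^{(l+1)}-z^{(l)}\|_2$; reassembling the blockwise conditions then produces an element of $\partial\Psi(z^{(l+1)})$ with
\begin{align}
d\big(0,\partial\Psi(z^{(l+1)})\big)\leq b\,\big\|z^{(l+1)}-z^{(l)}\big\|_2\enspace,
\end{align}
for a constant $b>0$ depending only on $\ell$ and the $\alpha_i$. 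This is the step I expect to be the main obstacle, and the place where the extension from the two-block result of \cite{att2010} is not purely cosmetic: the Gauss--Seidel sweep evaluates each block gradient at a point where the trailing blocks still carry their old values, and these mismatches must be accumulated carefully over all $P$ blocks through the single Lipschitz constant $\ell$.

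\emph{Convergence.} Compactness of $\Zcal$ furnishes a subsequence converging to some $\bar{z}\in\Zcal$; the decrease estimate together with the subgradient bound and the closedness of $\partial\Psi$ imply that $\Psi$ is constant on the set of limit points and that every such point is critical for $\Psi$. Invoking the KL property of $\Psi$ at $\bar{z}$ (Assumption \ref{ass:uni_kl}, the indicator $\iota_\Zcal$ being semi-algebraic so that the sum retains the property) and combining it with the two preceding estimates via the concavity of the desingularising function $\phi$ produces a telescoping inequality that makes $\sum_l\|z^{(l+1)}-z^{(l)}\|_2$ finite. The sequence is then Cauchy and converges to a single critical point $z^\infty(\bar{\mu}_k,s_{k+1})$ of $L_\rho(\cdot,\bar{\mu}_k,s_{k+1})+\iota_\Zcal(\cdot)$, which is the claim.
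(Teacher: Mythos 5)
Your proposal is correct and follows essentially the approach the paper relies on: the paper gives no proof body at all, simply invoking Theorem $3.2$ of \cite{att2010} as a ``trivial extension'' to the multi-stage case, and what you have written out is precisely the three-ingredient Kurdyka--Lojasiewicz argument (sufficient decrease, relative-error subgradient bound, KL-based finite-length) underlying that reference, carried out for $P$ Gauss--Seidel blocks. Your treatment of the block-gradient mismatch in the subgradient bound, and your remark that the KL property must be invoked for $L_\rho+\iota_\Zcal$ (requiring semi-algebraicity of $\Zcal$, a point the paper's Assumption \ref{ass:uni_kl} glosses over), make explicit exactly the details the paper leaves implicit.
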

This convergence result comes with a local sub-linear R-convergence rate estimate.  
\begin{lem}[Local R-convergence rate estimate]
~\\
\label{lem:bcd_cv_rate}
There exists a constant $C>0$ such that, assuming $\bar{z}_k\in\Bcal\left(0,\delta\right)$,  
\begin{align}
\label{eq:cv_rate}
&\left\|\bar{z}_{k+1}-z^\infty\left(\bar{\mu}_k,s_{k+1}\right)\right\|_2\leq\nonumber\\
&~~~~~~~~~~~~~~~~~~~~~~~~~~~~CM^{-\psi\left(\hat{\theta}\right)}{\left\|\bar{z}_k-z^\infty\left(\bar{\mu}_k,s_{k+1}\right)\right\|_2}\enspace,
\end{align}
where, given $\theta\in\left(\nicefrac[]{1}{2},1\right)$,\begin{align}
\psi\left(\theta\right):=\frac{1-\theta}{2\theta-1}\enspace.
\end{align}
\end{lem}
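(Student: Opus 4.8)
The plan is to recognise the inner loop as a proximal alternating (block Gauss--Seidel) minimisation of the lower semicontinuous function $L:=L_{\rho}(\cdot,\bar{\mu}_k,s_{k+1})+\iota_{\Zcal}$, and to reproduce, with explicit constants, the sublinear rate estimate of \cite{att2010}. By Lemma \ref{lem:bcd_cv} the sequence $\{z^{(l)}\}$ converges to the critical point $z^{\infty}:=z^{\infty}(\bar{\mu}_k,s_{k+1})$ of $L$, so it suffices to control the speed at which $z^{(M)}$ approaches $z^{\infty}$. The argument rests on three ingredients: a sufficient-decrease inequality, a subgradient (relative-error) bound, and the KL inequality of Assumption \ref{ass:uni_kl}.

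First I would establish sufficient decrease. Each block update minimises an $\alpha_i$-strongly convex surrogate over the convex set $\Zcal_i$ (cf.\ the Remark following Algorithm \ref{algo:mu_conv_al}); comparing the minimiser $z_i^{(l+1)}$ with the feasible point $z_i^{(l)}$ and telescoping over $i=1,\dots,P$ yields $L(z^{(l)})-L(z^{(l+1)})\ge a\|z^{(l+1)}-z^{(l)}\|_2^2$ with $a=\min_i\alpha_i$. Next, writing the first-order optimality conditions of the blockwise subproblems and using that $\nabla_z L_{\rho}$ is Lipschitz on the compact set $\Zcal$ (Assumption \ref{ass:semi_alg} and multilinearity of $g$), I would exhibit $\xi^{(l+1)}\in\partial L(z^{(l+1)})$ with $d(0,\partial L(z^{(l+1)}))\le\|\xi^{(l+1)}\|_2\le b\|z^{(l+1)}-z^{(l)}\|_2$. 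Setting $r_l:=L(z^{(l)})-L(z^{\infty})$ and invoking \eqref{eq:kl} at $z^{(l+1)}$ (with exponent $\theta=\theta(\bar{\mu}_k,s_{k+1})$) gives $c\,r_{l+1}^{\theta}\le d(0,\partial L(z^{(l+1)}))$. Combining the three bounds produces the scalar recursion
\[
\frac{a c^{2}}{b^{2}}\,r_{l+1}^{2\theta}\le r_l-r_{l+1}.
\]
Since $\theta\in(\nicefrac{1}{2},1)$ implies $2\theta\in(1,2)$, a standard analysis of this recursion (studying the increments of $l\mapsto r_l^{1-2\theta}$) yields the sublinear decay $r_l\le C_1\,l^{-1/(2\theta-1)}$.

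To pass from function values to iterates I would use the finite-length property: summing the relation $\|z^{(l+1)}-z^{(l)}\|_2\le(b/a)\big(\varphi(r_l)-\varphi(r_{l+1})\big)$ obtained from the KL desingularising function $\varphi(t)\propto t^{1-\theta}$ gives the tail bound $\|z^{(M)}-z^{\infty}\|_2\le\sum_{j\ge M}\|z^{(j+1)}-z^{(j)}\|_2\le C_2\,r_M^{1-\theta}$, whence $\|z^{(M)}-z^{\infty}\|_2\le C_3\,M^{-\psi(\theta)}$. Because $\psi$ is decreasing on $(\nicefrac{1}{2},1)$ and $\theta\le\hat{\theta}$, we have $M^{-\psi(\theta)}\le M^{-\psi(\hat{\theta})}$ for $M\ge1$, which is exactly why the uniform exponent $\hat{\theta}$ of Assumption \ref{ass:uni_kl} appears in the statement. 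The hypothesis $\bar{z}_k=z^{(0)}\in\Bcal(0,\delta)$ together with the finite-length estimate keeps every iterate inside the region where \eqref{eq:kl} is valid, while the uniform exponent $\hat{\theta}$, the common radius $\delta$, and compactness of $\Zcal$ make the constants $a,b,c$ (hence $C_3$) independent of the index $k$.

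The main obstacle is turning the pure rate $C_3 M^{-\psi(\hat{\theta})}$ into the multiplicative/contraction form $C\,M^{-\psi(\hat{\theta})}\|\bar{z}_k-z^{\infty}\|_2$ with a single $k$-independent $C$, since this is what the subsequent tracking analysis exploits (the factor $C M^{-\psi(\hat{\theta})}$ being $<1$ once $M$ is large). For this I would relate the initial gap to the initial distance through the quadratic upper bound $r_0\le\tfrac{1}{2}\ell\,\|\bar{z}_k-z^{\infty}\|_2^2$ ($z^{\infty}$ critical, $\nabla_z L_{\rho}$ Lipschitz with constant $\ell$ on $\Zcal$), use the sharpened solution of the recursion $r_M\le r_0\big(1+\kappa\,r_0^{2\theta-1}M\big)^{-1/(2\theta-1)}$, and substitute into $\|z^{(M)}-z^{\infty}\|_2\le C_2 r_M^{1-\theta}$ while tracking how the resulting constant depends on $r_0$, and hence on $\|\bar{z}_k-z^{\infty}\|_2$. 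This bookkeeping of constants, uniformly in $k$, is the delicate part; the KL estimates themselves are routine once the three ingredients above are in place.
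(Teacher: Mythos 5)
Your reconstruction of the KL machinery (sufficient decrease, the relative-error subgradient bound, the recursion $ac^2b^{-2}\,r_{l+1}^{2\theta}\le r_l-r_{l+1}$, and the finite-length tail bound $\|z^{(M)}-z^{\infty}\|_2\le C_2\,r_M^{1-\theta}$) is a faithful account of what the paper simply imports as a citation of \cite{att2009}, and your monotonicity argument for replacing $\theta\left(\bar{\mu}_k,s_{k+1}\right)$ by $\hat{\theta}$ is exactly the paper's. The genuine gap sits at the step you yourself flag as ``the delicate part'': the bookkeeping you propose cannot produce the multiplicative factor $\left\|\bar{z}_k-z^{\infty}\right\|_2$. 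Substituting the sharpened recursion $r_M\le r_0\left(1+\kappa\,r_0^{2\theta-1}M\right)^{-1/(2\theta-1)}$ into $\|z^{(M)}-z^{\infty}\|_2\le C_2\,r_M^{1-\theta}$ gives, in the regime $\kappa\,r_0^{2\theta-1}M\ge 1$, the bound $C_2\,\kappa^{-\psi(\theta)}\,r_0^{(1-\theta)-(2\theta-1)\psi(\theta)}\,M^{-\psi(\theta)}$, and the exponent of $r_0$ vanishes identically because $(2\theta-1)\psi(\theta)=1-\theta$. The dependence on the initial gap cancels exactly: for $\theta\in\left(\nicefrac{1}{2},1\right)$ the KL sublinear rate is an \emph{absolute} estimate whose constant is asymptotically independent of the initialisation (unlike the linear-rate case $\theta=\nicefrac{1}{2}$). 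In the complementary regime $\kappa\,r_0^{2\theta-1}M\le 1$ your quadratic bound $r_0\le\frac{\ell}{2}\left\|\bar{z}_k-z^{\infty}\right\|_2^2$ yields only $\left\|\bar{z}_k-z^{\infty}\right\|_2^{2(1-\theta)}$, and since $2(1-\theta)<1$ this is the wrong power: for small distances $\|\cdot\|^{2(1-\theta)}\gg\|\cdot\|$, so no $k$-independent constant closes the estimate. In short, no amount of constant-tracking converts the absolute rate into the relative form \eqref{eq:cv_rate}.

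The paper's actual device is cruder but is the only move that works: it asserts that, because $\bar{z}_k$ is the truncated (hence strictly suboptimal) output of the previous time step, there is a uniform lower bound $\left\|\bar{z}_k-z^{\infty}\left(\bar{\mu}_k,s_{k+1}\right)\right\|_2\ge\kappa>0$ for all $k\ge 0$, and then simply writes $C\,M^{-\psi(\hat{\theta})}\le\left(C/\kappa\right)M^{-\psi(\hat{\theta})}\left\|\bar{z}_k-z^{\infty}\left(\bar{\mu}_k,s_{k+1}\right)\right\|_2$ (absorbing, ``without loss of generality,'' the resulting $k$-dependent constants into a single $C$). So \eqref{eq:cv_rate} is a rescaling of the absolute rate, valid only on the set where the warm start stays bounded away from the limit point; it is not a contraction estimate valid for arbitrarily good warm starts. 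If you append this lower-bound hypothesis and the division by $\kappa$, your argument closes and is in fact more self-contained than the paper's (which cites the rate without derivation); without it, the lemma as stated is not reachable by your route.
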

\begin{proof}
From \cite{att2009}, as  the Lojasiewicz exponent $\theta\left(\bar{\mu}_k,s_{k+1}\right)$ associated with $z^\infty\left(\bar{\mu}_k,s_{k+1}\right)$ lies in $\left(\nicefrac[]{1}{2},1\right)$, by Assumption \ref{ass:uni_kl}, and $\bar{z}_k\in\Bcal\left(0,\delta\right)$, it can be shown that, given $\bar{\mu}_k\in\Rset^m$ and $s_{k+1}\in\Scal$, there exists $C\left(\bar{\mu}_k,s_{k+1}\right)>0$ such that 
\begin{align}
\left\|\bar{z}_{k+1}-z^\infty\left(\bar{\mu}_k,s_{k+1}\right)\right\|_2\leq C\left(\bar{\mu}_k,s_{k+1}\right)M^{-\psi\left(\theta\left(\bar{\mu}_k,s_{k+1}\right)\right)}\enspace.
\end{align}
Note that $\theta\mapsto M^{-\psi\left(\theta\right)}$ is strictly increasing on $\left(\nicefrac[]{1}{2},1\right)$.
Hence, from Assumption \ref{ass:uni_kl}, 
\begin{align}
M^{-\psi\left(\theta\left(\bar{\mu}_k,s_{k+1}\right)\right)}\leq M^{-\psi\left(\hat{\theta}\right)}\enspace.
\end{align}
Clearly, as $\bar{z}_k$ is the suboptimal primal solution of \eqref{eq:mu_conv_pb} at time $k$, there exists $\kappa>0$ such that for all $k\geq 0$, 
\begin{align} 
\left\|\bar{z}_k-z^\infty\left(\bar{\mu}_k,s_{k+1}\right)\right\|_2\geq\kappa\enspace.
\end{align}
Hence there exists $C'\left(\bar{\mu}_k,s_{k+1}\right)>0$ such that 
\begin{align}
&\left\|\bar{z}_{k+1}-z^\infty(\bar{\mu}_k,s_{k+1})\right\|_2\leq\nonumber\\ 
&~~~~~~~~~~~~~~~C'\left(\bar{\mu}_k,s_{k+1}\right)M^{-\psi\left(\hat{\theta}\right)}\left\|\bar{z}_k-z^\infty(\bar{\mu}_k,s_{k+1})\right\|_2\enspace.
\end{align}
Without loss of generality, one can assume that the constants $C'\left(\bar{\mu}_k,s_{k+1}\right)$ are upper bounded, which yields \eqref{eq:cv_rate}.
\end{proof}
\begin{rk}
Note that the R-convergence rate of Lemma \ref{lem:bcd_cv_rate} shows that convergence of the multi-convex alternations is theoretically quite slow. Yet, the algorithm is observed to be quite efficient in practice, as shown in Section \ref{sec:num_ex}. We insist on the fact that it is an upper bound, which is used for theoretical purpose only. 
\end{rk}
\section{Contraction analysis of the optimality tracking algorithm}
\label{sec:stab_ana}
As Algorithm \ref{algo:mu_conv_al} is a truncated scheme applied online for varying values of the parameters $s\in\Scal$, a natural question is: under which conditions does the sub-optimal primal-dual solution converge to a solution of \eqref{eq:mu_conv_pb} as the parameter $s$ varies ? More precisely, is it possible to ensure that for all $k\geq0$, $\left\|\bar{w}_{k+1}-w^\ast_{k+1}\right\|_2\leq\alpha\left\|\bar{w}_k-w^\ast_k\right\|_2$, where $\alpha$ is a constant in $\left(0,1\right)$ ?\\
In the sequel, we show that if $\rho$ and $M$ are carefully chosen, such a contraction property is satisfied in a weak sense, and the error sequence $\left\|\bar{w}_k-w^\ast_k\right\|_2$ remains bounded with $k$, assuming that the parameter difference $\left\|s_{k+1}-s_k\right\|_2$ is small enough.
\subsection{Existence and uniqueness of critical points}
\label{subsec:ex_uni}
Given a critical point $w^\ast_k$ of problem \eqref{eq:mu_conv_pb}, strong regularity of \eqref{eq:ge_1} implies that a critical point of \eqref{eq:mu_conv_pb} exists for $s=s_{k+1}$ and is unique in a neighbourhood of  $w^\ast_k$, assuming that $s_{k+1}$ is in a well-chosen neighbourhood of $s_k$. 
\begin{ass}
\label{ass:ngh_prm}
For all $k\in\Nset$, $\left\|s_{k+1}-s_k\right\|_2\leq r_A$.
\end{ass}
\begin{lem}
For all $k\in\Nset$ and $s_k\in\Scal$, given $w^\ast_k$ satisfying \eqref{eq:ge_1}, there exists a unique $w^\ast_{k+1}\in\Bcal\left(w^\ast_k,\delta_A\right)$ such that 
\begin{align}
0\in F\left(w^\ast_{k+1},s_{k+1}\right)+\Ncal_{\Zcal\times\Rset^m}\left(w^\ast_{k+1}\right)\enspace.
\end{align}
\end{lem}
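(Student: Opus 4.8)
The plan is to derive the statement as an immediate specialisation of Lemma \ref{lem:str_reg} (Robinson's Theorem $2.1$) to the particular parameter value $s=s_{k+1}$. The strong regularity hypothesis (Assumption \ref{ass:str_reg}) has already been converted, through Lemma \ref{lem:str_reg}, into a ready-made local existence, uniqueness and Lipschitz statement valid for every parameter $s$ in the ball $\Bcal\left(s_k,r_A\right)$; hence the only thing I would actually need to check is that the next parameter $s_{k+1}$ falls inside that admissible ball.

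First I would invoke Assumption \ref{ass:ngh_prm}, which gives $\left\|s_{k+1}-s_k\right\|_2\leq r_A$, that is $s_{k+1}\in\Bcal\left(s_k,r_A\right)$. At this point it is essential to recall the remark following Lemma \ref{lem:str_reg}, namely that the radii $\delta_A$ and $r_A$ may be taken independently of the time index $k$ and of the base parameter $s_k$; without this uniformity the admissible parameter window could shrink along the trajectory and the conclusion could not be stated uniformly in $k\in\Nset$.

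Second, I would apply Lemma \ref{lem:str_reg} at $s=s_{k+1}$. Since $s_{k+1}\in\Bcal\left(s_k,r_A\right)$, the lemma furnishes a \emph{unique} point $w^\ast\left(s_{k+1}\right)\in\Bcal\left(w^\ast_k,\delta_A\right)$ satisfying $0\in F\left(w^\ast\left(s_{k+1}\right),s_{k+1}\right)+\Ncal_{\Zcal\times\Rset^m}\left(w^\ast\left(s_{k+1}\right)\right)$. Setting $w^\ast_{k+1}:=w^\ast\left(s_{k+1}\right)$ then yields exactly the asserted critical point, with both existence and local uniqueness inherited verbatim from the lemma.

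I do not expect a genuine obstacle here: all the analytical weight is carried by Robinson's theorem and Assumption \ref{ass:str_reg}, and the argument is essentially a bookkeeping step. The only point requiring care is the matching of neighbourhoods, namely ensuring that the parameter step does not exceed $r_A$ (which is precisely the role of Assumption \ref{ass:ngh_prm}) and that uniqueness is claimed only \emph{within} the ball $\Bcal\left(w^\ast_k,\delta_A\right)$ rather than globally, since the non-convex program \eqref{eq:mu_conv_pb} may well possess other critical points outside this neighbourhood.
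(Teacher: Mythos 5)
Your proposal is correct and matches the paper's own argument, which simply states that the result is immediate from Assumption \ref{ass:ngh_prm} and the strong regularity of \eqref{eq:ge_1}; you have merely spelled out the bookkeeping (checking $s_{k+1}\in\Bcal\left(s_k,r_A\right)$, invoking Lemma \ref{lem:str_reg} at $s=s_{k+1}$, and noting the $k$-uniformity of the radii $r_A$ and $\delta_A$) that the paper leaves implicit.
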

\begin{proof}
Immediate from Assumption \ref{ass:ngh_prm} and strong regularity of \eqref{eq:ge_1}.
\end{proof}
\subsection{An auxiliary generalised equation}
\label{subsec:aux_ge}
In Algorithm \ref{algo:mu_conv_al}, the proximal alternating loop, warm-started at $\bar{z}_k$, converges to $z^\infty\left(\bar{\mu}_k,s_{k+1}\right)$, which is a critical point of $L_\rho\left(\cdot,\bar{\mu}_k,s_{k+1}\right)+\iota_\Zcal\left(\cdot\right)$, by Lemma \ref{lem:bcd_cv}. The following generalised equation characterises critical points of the augmented Lagrangian function $L_\rho\left(\cdot,\bar{\mu},s\right)+\iota_\Zcal\left(\cdot\right)$ in a primal-dual manner, which is helpful in our analysis:
\begin{align}
\label{eq:ge_2}
0\in G_\rho\left(w,d_\rho\left(\bar{\mu}\right),s\right)+\Ncal_{\Zcal\times\Rset^m}\left(w\right)\enspace,
\end{align}
where $d_\rho\left(\bar{\mu}\right):=\left(\bar{\mu}-\mu^\ast_k\right)/\rho$ and 
\begin{align}
G_\rho\left(w,d_\rho\left(\bar{\mu}\right),s\right):=\begin{bmatrix}
\nabla_zf\left(z\right)+\nabla_zg\left(z,s\right)^{\Trans}\mu\\
g\left(z,s\right)+d_\rho\left(\bar{\mu}\right)+\displaystyle\frac{\mu^{\ast}_k-\mu}{\rho}
\end{bmatrix}\enspace.
\end{align}
In the sequel, a primal-dual point satisfying \eqref{eq:ge_2} is denoted by $w^{\ast}\left(d_\rho\left(\bar{\mu}\right),s\right)$ or $w^{\ast}\left(\bar{\mu},s\right)$ without distinction. 
\begin{lem}
Let $\bar{\mu}\in\Rset^m$, $\rho>0$ and $s\in\Scal$. The primal point $z^\ast(\bar{\mu},s)$ is a critical point of $L_{\rho}(\cdot,\bar{\mu},s)+\iota_\Zcal(\cdot)$ if and only if the primal-dual point 
\begin{align}
w^\ast(\bar{\mu},s)=\begin{pmatrix}
z^\ast\left(\bar{\mu},s\right)\\
\bar{\mu}_k+{\rho}g\left(z^\ast\left(\bar{\mu},s\right),s\right)
\end{pmatrix} 
\end{align}
is a solution of \eqref{eq:ge_2}.
\end{lem}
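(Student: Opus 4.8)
The plan is to show that both conditions in the equivalence reduce to the single stationarity inclusion
\[
0 \in \nabla_z L_\rho\left(z^\ast,\bar{\mu},s\right)+\Ncal_\Zcal\left(z^\ast\right),
\]
and then to read off the two directions from a block decomposition of \eqref{eq:ge_2}. First I would record the gradient of the augmented Lagrangian in factored form: differentiating \eqref{eq:def_al} with respect to $z$ gives
\[
\nabla_z L_\rho\left(z,\bar{\mu},s\right)=\nabla_z f\left(z\right)+\nabla_z g\left(z,s\right)^{\Trans}\left(\bar{\mu}+\rho g\left(z,s\right)\right).
\]
Since $f$ and $g(\cdot,s)$ are $C^1$ by Assumption \ref{ass:semi_alg}, the map $L_\rho(\cdot,\bar{\mu},s)$ is continuously differentiable, so the exact subdifferential sum rule applies, and combined with the earlier lemma $\partial\iota_\Zcal=\Ncal_\Zcal$ it shows that $z^\ast$ is a critical point of $L_\rho(\cdot,\bar{\mu},s)+\iota_\Zcal(\cdot)$ if and only if the displayed inclusion holds.

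Next I would unfold the auxiliary generalised equation \eqref{eq:ge_2}. Because $\Zcal\times\Rset^m$ is a product and the normal cone to $\Rset^m$ is trivial, one has $\Ncal_{\Zcal\times\Rset^m}(w)=\Ncal_\Zcal(z)\times\{0\}$, so \eqref{eq:ge_2} splits into the primal inclusion $0\in\nabla_z f(z)+\nabla_z g(z,s)^{\Trans}\mu+\Ncal_\Zcal(z)$ and the exact dual equation $0=g(z,s)+d_\rho(\bar{\mu})+(\mu^{\ast}_k-\mu)/\rho$. Substituting the definition $d_\rho(\bar{\mu})=(\bar{\mu}-\mu^{\ast}_k)/\rho$, the $\mu^{\ast}_k$ terms cancel and the dual equation collapses to $\mu=\bar{\mu}+\rho g(z,s)$, which is exactly the dual component of $w^\ast(\bar{\mu},s)$ in the statement.

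Finally I would close both directions. For the forward implication, given a critical point $z^\ast$, I set $\mu^\ast:=\bar{\mu}+\rho g(z^\ast,s)$; the dual equation then holds by construction, and inserting $\mu^\ast$ into the primal inclusion reassembles $\nabla_z L_\rho(z^\ast,\bar{\mu},s)$ in its factored form, so that $w^\ast=(z^{\ast\Trans},\mu^{\ast\Trans})^{\Trans}$ solves \eqref{eq:ge_2}. For the converse, any solution $w^\ast=(z^{\ast\Trans},\mu^{\ast\Trans})^{\Trans}$ of \eqref{eq:ge_2} satisfies the dual equation, forcing $\mu^\ast=\bar{\mu}+\rho g(z^\ast,s)$; substituting this back into the primal inclusion again recovers the stationarity condition, so $z^\ast$ is a critical point of $L_\rho(\cdot,\bar{\mu},s)+\iota_\Zcal(\cdot)$.

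I expect no serious obstacle here, since the statement is essentially a bookkeeping identity relating a reduced-gradient condition to a primal--dual reformulation. The only points requiring care are the legitimacy of the smooth-plus-indicator sum rule (licensed by the $C^1$ regularity of $f$ and $g$ and the convexity of $\Zcal$) and the algebraic cancellation of $\mu^{\ast}_k$ in the dual block; the latter confirms that the shift $\mu^{\ast}_k$ appearing in $d_\rho$ and $G_\rho$ is immaterial to this equivalence and only plays a role later, in the contraction estimate of Section \ref{sec:stab_ana}.
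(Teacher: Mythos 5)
Your proof is correct and takes essentially the same route as the paper's: the dual block of \eqref{eq:ge_2} (whose normal-cone component is trivial since the dual variable ranges over all of $\Rset^m$) pins $\mu^\ast=\bar{\mu}+\rho g\left(z^\ast,s\right)$ after the $\mu^\ast_k$ terms cancel, and substituting this into the primal block reassembles $0\in\nabla_z L_\rho\left(z^\ast,\bar{\mu},s\right)+\Ncal_\Zcal\left(z^\ast\right)$. The only differences are cosmetic: the paper compresses the forward direction into ``the necessary condition is clear'' while you spell out both directions, the sum rule, and the product structure of the normal cone, and you silently correct the statement's typo $\bar{\mu}_k$ (which, per the paper's own proof, should read $\bar{\mu}$).
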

\begin{proof}
The necessary condition is clear. To prove the sufficient condition, assume that $w^\ast\left(\bar{\mu},s\right)=\left(z^\ast\left(\bar{\mu},s\right)^{\Trans},\mu^\ast\left(\bar{\mu},s\right)^{\Trans}\right)^{\Trans}$ satisfies \eqref{eq:ge_2}. The second half of \eqref{eq:ge_2} implies that $\mu^\ast\left(\bar{\mu},s\right)=\bar{\mu}+\rho g\left(z^\ast\left(\bar{\mu},s\right),s\right)$. Putting this expression in the first part of \eqref{eq:ge_2}, this implies that $z^\ast\left(\bar{\mu},s\right)$ is a critical point of $L_\rho\left(\cdot,\bar{\mu},s\right)+\iota_\Zcal\left(\cdot\right)$.
\end{proof}
As $z^\infty\left(\bar{\mu}_k,s_{k+1}\right)$ is a critical point of $L_\rho\left(\cdot,\bar{\mu}_k,s_{k+1}\right)+\iota_\Zcal\left(\cdot\right)$, one can define 
\begin{align}
\label{eq:def_w_inf}
w^\infty\left(d_\rho(\bar{\mu}_k),s_{k+1}\right):=\begin{pmatrix}
z^\infty\left(\bar{\mu}_k,s_{k+1}\right)\\
\bar{\mu}_k+\rho g\left(z^\infty(\bar{\mu}_k,s_{k+1}),s_{k+1}\right)
\end{pmatrix}\enspace,
\end{align}
which satisfies \eqref{eq:ge_2}.
Note that the generalised equation \eqref{eq:ge_2} is parametric in $s$ and $d_\rho(\cdot)$, which represents the normalised distance between the sub-optimal dual and the optimal dual parameters.
Assuming that the penalty parameter $\rho$ is well-chosen, the generalised equation \eqref{eq:ge_2} can be proven to be strongly regular at a given solution.
\begin{lem}[Strong regularity of \eqref{eq:ge_2}]
There exists $\tilde{\rho}>0$ such that for all $\rho>\tilde{\rho}$ and $k\in\Nset$, \eqref{eq:ge_2} is strongly regular at $w^\ast_k=w^\ast\left(0,s_k\right)$. 
\end{lem}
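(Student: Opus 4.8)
Strong regularity, in the sense of the definition above, is a property of the \emph{linearisation} of a generalised equation at a reference solution, so the plan is to compare the linearisation of \eqref{eq:ge_2} at $w^\ast_k=w^\ast(0,s_k)$ with that of the already strongly regular equation \eqref{eq:ge_1} at the same point, and to show that the two differ by a linear operator whose norm vanishes as $\rho\to\infty$. First I would verify that $w^\ast_k$ genuinely solves \eqref{eq:ge_2} for $d_\rho=0$ and $s=s_k$: taking $\bar{\mu}=\mu^\ast_k$ (so that $d_\rho(\bar{\mu})=0$) and $s=s_k$, the term $(\mu^\ast_k-\mu)/\rho$ vanishes at $\mu=\mu^\ast_k$, whence $G_\rho(w^\ast_k,0,s_k)=F(w^\ast_k,s_k)$. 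Thus the reference \emph{values} of $G_\rho$ and $F$ coincide, and it only remains to compare their Jacobians in $w=\left(z^\Trans,\mu^\Trans\right)^\Trans$.

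Evaluating at $(w^\ast_k,0,s_k)$, both mappings share the same first block row $\big[\,\nabla^2_{zz}\Lcal\;\;\nabla_zg^\Trans\,\big]$, where $\nabla^2_{zz}\Lcal=\nabla^2_{zz}f+\sum_i\mu_i\nabla^2_{zz}g_i$ is the Hessian of the Lagrangian in $z$. In the second block row, $F$ contributes $\big[\,\nabla_zg\;\;0\,\big]$ whereas $G_\rho$ contributes $\big[\,\nabla_zg\;\;-\tfrac{1}{\rho}I_m\,\big]$, the extra diagonal block coming from differentiating $(\mu^\ast_k-\mu)/\rho$ in $\mu$. Consequently
\begin{align}
\nabla_wG_\rho(w^\ast_k,0,s_k)-\nabla_wF(w^\ast_k,s_k)=\diag\Big(0,-\tfrac{1}{\rho}I_m\Big)=:E_\rho\enspace,
\end{align}
so that $\left\|E_\rho\right\|_2=1/\rho$, which tends to $0$ as $\rho$ grows.

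The final step is to invoke the stability of strong regularity under perturbation of the linear part. I would use the quantitative form (Robinson's theorem; see also Dontchev--Rockafellar): if the linear generalised equation built from $\nabla_wF(w^\ast_k,s_k)$ is strongly regular with a localised inverse solution map that is Lipschitz with modulus $\lambda$, then for any $E_\rho$ with $\lambda\left\|E_\rho\right\|_2<1$ the linear generalised equation built from $\nabla_wF+E_\rho$ is again strongly regular, with modulus $\lambda/(1-\lambda\left\|E_\rho\right\|_2)$. This rests on a Banach fixed-point argument: since $G_\rho(w^\ast_k)=F(w^\ast_k)$, solving $r\in F(w^\ast_k)+(\nabla_wF+E_\rho)(w-w^\ast_k)+\Ncal_{\Zcal\times\Rset^m}(w)$ is equivalent to the fixed-point equation $w=\Sigma\big(r-E_\rho(w-w^\ast_k)\big)$, where $\Sigma$ is the $\lambda$-Lipschitz localised solution map supplied by Assumption \ref{ass:str_reg}; the right-hand side is a contraction in $w$ precisely when $\lambda\left\|E_\rho\right\|_2<1$. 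With $\left\|E_\rho\right\|_2=1/\rho$ this reads $\rho>\lambda$, so it suffices to set $\tilde{\rho}:=\lambda$. By Assumption \ref{ass:str_reg} together with the remark that the radii, and hence the modulus $\lambda$, may be taken independent of $s_k$, a single $\tilde{\rho}$ works uniformly for all $k\in\Nset$.

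The hard part will be making the perturbation argument rigorous at the level of generalised equations rather than smooth equations: one must track how the localisation radii $\eta$ and $\kappa$ deform under $E_\rho$ so that the fixed-point map demonstrably sends a closed ball into itself, and one must justify that the modulus $\lambda$ of Assumption \ref{ass:str_reg} can indeed be taken uniform in $k$. Once the quantitative perturbation lemma for strongly regular generalised equations is available, the conclusion follows immediately from the estimate $\left\|E_\rho\right\|_2=1/\rho$.
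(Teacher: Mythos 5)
Your proposal is correct, but it takes a genuinely different route from the paper's proof. The paper disposes of this lemma by citation: it invokes the reduction procedure of \cite{robin1980} together with the arguments of Proposition $2.4$ in \cite{bert1982} --- i.e.\ the classical augmented-Lagrangian fact that, once the penalty exceeds a threshold tied to the curvature of the Lagrangian, the relevant reduced (augmented) Hessian becomes positive definite --- and combines this with strong regularity of \eqref{eq:ge_1} to verify the strong-regularity conditions for the linearised KKT system of \eqref{eq:ge_2}. You instead observe that $G_\rho(\cdot,0,s_k)$ and $F(\cdot,s_k)$ agree in value at $w^\ast_k$ and that their Jacobians differ only by $E_\rho=\diag\big(0,-\tfrac{1}{\rho}I_m\big)$ with $\left\|E_\rho\right\|_2=\nicefrac{1}{\rho}$, and you transfer strong regularity from \eqref{eq:ge_1} to \eqref{eq:ge_2} by stability of strong regularity under small linear perturbations, via the Banach fixed-point map $w\mapsto\Sigma\big(r-E_\rho(w-w^\ast_k)\big)$. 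This perturbation fact is standard (it is essentially contained in Robinson's own results in \cite{robin1980}, and appears in quantitative form in Dontchev and Rockafellar's work on solution mappings), so the localisation bookkeeping you flag as ``the hard part'' is routine rather than a genuine obstacle; the uniformity in $k$ you need is covered by the same without-loss-of-generality convention the paper adopts for $\delta_A$ and $r_A$ after Lemma \ref{lem:str_reg}. What each approach buys: yours is more elementary and self-contained, works directly at the level of the generalised equation without exploiting the KKT structure, and produces an explicit admissible threshold $\tilde{\rho}=\lambda$ in terms of the strong-regularity Lipschitz modulus of \eqref{eq:ge_1}; the paper's route instead ties $\tilde{\rho}$ to second-order (curvature) information of the Lagrangian through Bertsekas's argument, which is structurally more informative but rests on machinery your argument avoids. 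Since the lemma only asserts existence of some $\tilde{\rho}>0$, either threshold is acceptable.
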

\begin{proof}
This follows from the reduction procedure described in \cite{robin1980}, the arguments developed in Proposition $2.4$ in \cite{bert1982} and strong regularity of \eqref{eq:ge_1} for all $k\in\Nset$.
\end{proof}
\begin{ass}
\label{ass:rho}
The penalty parameter satisfies $\rho>\tilde{\rho}$.
\end{ass}
From the strong regularity of \eqref{eq:ge_2} at $w^\ast_k$, using Theorem $2.1$ in \cite{robin1980}, one obtains the following local Lipschitz property of a solution $w\left(\cdot\right)$ to \eqref{eq:ge_2}. 
\begin{lem}
\label{lem:str_reg_2}
There exists radii $\delta_B>0$, $r_B>0$ and $q_B>0$ such that for all $k\in\Nset$,
\begin{align}
\forall d\in\Bcal\left(0,q_B\right),&\forall s\in\Bcal\left(s_k,r_B\right),\exists!w^{\ast}(d,s)\in\Bcal\left(w^{\ast}_k,\delta_B\right),\nonumber\\
&0\in G_\rho(w^{\ast}(d,s),d,s)+\Ncal_{\Zcal\times\Rset^m}(w^{\ast}(d,s))
\end{align}
and for all $d,d'\in\Bcal\left(0,q_B\right)$ and all $s,s'\in\Bcal\left(s_k,r_B\right)$,
\begin{align}
&\left\|w^{\ast}(d,s)-w^{\ast}(d',s')\right\|_2\leq\nonumber\\
&~~~~~~~~\lambda_B\left\|G_\rho\left(w^\ast(d',s'),d,s\right)-G_\rho\left(w^\ast(d',s'),d',s'\right)\right\|_2\enspace,
\end{align}
where $\lambda_B>0$ is a Lipschitz constant associated with \eqref{eq:ge_2}.
\end{lem}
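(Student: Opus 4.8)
The plan is to read Lemma \ref{lem:str_reg_2} as the two-parameter analogue of Lemma \ref{lem:str_reg} and to derive it by a single application of Robinson's Theorem $2.1$ \cite{robin1980} at the reference solution $w^\ast_k=w^\ast(0,s_k)$ of \eqref{eq:ge_2}. The immediately preceding lemma, together with Assumption \ref{ass:rho}, already supplies what the theorem needs: under $\rho>\tilde{\rho}$, the generalised equation \eqref{eq:ge_2} is strongly regular at $w^\ast_k$. So the substance of the argument is to turn that strong regularity into the stated existence, uniqueness and Lipschitz estimate, uniformly in $k$.

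First I would check the smoothness hypothesis of Robinson's theorem, namely that $w\mapsto G_\rho(w,d,s)$ is continuously differentiable in $w$: this holds because $f$ is $C^2$ by Assumption \ref{ass:semi_alg} and $g(\cdot,s)$ is multi-linear, so that $\nabla_zf$ and $\nabla_zg(\cdot,s)$ are smooth. Next, viewing \eqref{eq:ge_2} as a generalised equation parametric in the pair $(d,s)$ and regarding $G_\rho(\cdot,d,s)$ as a perturbation of the reference map $G_\rho(\cdot,0,s_k)$, Robinson's theorem produces radii $\delta_B>0$, $q_B>0$ and $r_B>0$ and a constant $\lambda_B>0$ such that, for every $(d,s)\in\Bcal(0,q_B)\times\Bcal(s_k,r_B)$, there is a unique solution $w^\ast(d,s)\in\Bcal(w^\ast_k,\delta_B)$ of \eqref{eq:ge_2}, and such that this solution moves Lipschitz-continuously with the perturbation, the perturbation being measured by the difference of the two maps at a common point. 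Evaluating that difference at $w^\ast(d',s')$ yields exactly the displayed estimate, with $\delta_B$, $q_B$, $r_B$ identified as the neighbourhood radii and $\lambda_B$ as the Lipschitz modulus of the inverse map.

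The main obstacle is the uniformity in $k$: Robinson's theorem applied at one solution gives radii and a Lipschitz constant that a priori depend on the local regularity moduli of \eqref{eq:ge_2} at $w^\ast_k$, whereas the statement asks for a single choice of $\delta_B,q_B,r_B,\lambda_B$ valid for all $k\in\Nset$. I would dispose of this exactly as was done for Lemma \ref{lem:str_reg}: the strong regularity of \eqref{eq:ge_2} is obtained, through the reduction procedure and Proposition $2.4$ in \cite{bert1982}, from the strong regularity of \eqref{eq:ge_1}, whose constants $\delta_A$, $r_A$ and $\lambda_A$ are already uniform in $k$ by the remark following Lemma \ref{lem:str_reg}. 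Propagating this uniformity through the reduction (and noting that the only $k$-dependent data entering $G_\rho$, namely $\mu^\ast_k$, does so in a controlled, affine manner) yields $k$-independent constants. I would state this propagation step explicitly rather than absorbing it into a ``without loss of generality'', since it is the one place where the argument is genuinely more delicate than a verbatim copy of Lemma \ref{lem:str_reg}.
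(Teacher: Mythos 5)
Your proposal matches the paper's own argument: the paper obtains Lemma \ref{lem:str_reg_2} exactly as you do, by a single application of Robinson's Theorem $2.1$ to the generalised equation \eqref{eq:ge_2}, whose strong regularity at $w^\ast_k$ under $\rho>\tilde{\rho}$ (Assumption \ref{ass:rho}) was established in the immediately preceding lemma. Your extra care about the smoothness hypothesis and the uniformity of $\delta_B$, $r_B$, $q_B$, $\lambda_B$ in $k$ only makes explicit what the paper absorbs into a ``without loss of generality''-style convention, so the two arguments are essentially identical.
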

Note that, given $w\in\Zcal\times\Rset^m$, $d,d'\in\Rset^m$ and $s,s'\in\Scal$, one can write
\begin{align}
G_\rho\left(w,d,s\right)-G_\rho\left(w,d',s'\right)=&F(w,s)-F(w,s')\nonumber\\
&+\begin{bmatrix}
0\\
d-d'
\end{bmatrix}\enspace,
\end{align}
which, from Assumption \ref{ass:glob_lip_F}, implies the following Lemma.
\begin{lem}
\label{lem:glob_lip_G}
There exists $\lambda_G>0$ such that for all $w\in\Zcal\times\Rset^m$, for all $d,d'\in\Rset^m$ and all $s,s'\in\Rset^m$, 
\begin{align}
\label{eq:glob_lip_G}
\left\|G_\rho\left(w,d,s\right)-G_\rho\left(w,d',s'\right)\right\|_2\leq\lambda_G\left\|\begin{pmatrix}
d\\ s
\end{pmatrix}-\begin{pmatrix}
d'\\ s'
\end{pmatrix}\right\|_2\enspace.
\end{align}
\end{lem}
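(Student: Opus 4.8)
The plan is to derive \eqref{eq:glob_lip_G} directly from the decomposition displayed immediately before the Lemma statement, which expresses the difference $G_\rho(w,d,s)-G_\rho(w,d',s')$ as the sum of $F(w,s)-F(w,s')$ and the block vector $\bigl(0^{\Trans},(d-d')^{\Trans}\bigr)^{\Trans}$. The key observation is that this decomposition isolates the two sources of variation: the parameter $s$ enters only through the function $F$, which is already controlled by Assumption \ref{ass:glob_lip_F}, while the shift $d$ enters only additively in the dual block. Consequently the bound reduces to a triangle-inequality argument combining these two estimates.

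First I would apply the triangle inequality to the decomposition, writing
\begin{align}
\left\|G_\rho(w,d,s)-G_\rho(w,d',s')\right\|_2\leq\left\|F(w,s)-F(w,s')\right\|_2+\left\|d-d'\right\|_2\enspace,
\end{align}
where the second term uses that the Euclidean norm of $\bigl(0^{\Trans},(d-d')^{\Trans}\bigr)^{\Trans}$ equals $\|d-d'\|_2$. Next I would invoke Assumption \ref{ass:glob_lip_F} to bound the first term by $\lambda_F\|s-s'\|_2$, which is valid for all $w\in\Zcal\times\Rset^m$. This yields an upper bound of the form $\lambda_F\|s-s'\|_2+\|d-d'\|_2$.

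Finally I would convert this sum of separate norms into a single norm of the stacked vector $\bigl(d^{\Trans},s^{\Trans}\bigr)^{\Trans}-\bigl(d'^{\Trans},s'^{\Trans}\bigr)^{\Trans}$. Since $\|s-s'\|_2$ and $\|d-d'\|_2$ are each dominated by the norm of the concatenated difference, setting $\lambda_G:=\lambda_F+1$ (or more sharply $\sqrt{\lambda_F^2+1}$ via Cauchy--Schwarz, depending on the desired constant) gives
\begin{align}
\lambda_F\|s-s'\|_2+\|d-d'\|_2\leq\lambda_G\left\|\begin{pmatrix}d\\ s\end{pmatrix}-\begin{pmatrix}d'\\ s'\end{pmatrix}\right\|_2\enspace,
\end{align}
which is exactly \eqref{eq:glob_lip_G}. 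I do not anticipate a genuine obstacle here: the result is essentially a restatement of the preceding displayed identity combined with the global Lipschitz hypothesis on $F$, and the only minor point worth care is the choice of norm-combination constant so that $\lambda_G$ is stated cleanly and uniformly in $k$, $w$, $\rho$, and the arguments $d,d',s,s'$.
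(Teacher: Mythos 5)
Your proof is correct and follows essentially the same route as the paper's: the paper's proof is a one-line ``after straightforward calculations'' remark resting on the same displayed decomposition and Assumption \ref{ass:glob_lip_F}, with the reported constant $\lambda_G=\sqrt{\max\{\lambda_F^2,1\}+\lambda_F}$ arising from squaring the decomposition and applying Young's inequality to the cross term. Your triangle-inequality-plus-Cauchy--Schwarz combination gives the (in fact slightly sharper) constant $\sqrt{\lambda_F^2+1}$, and the discrepancy is immaterial since the lemma only asserts existence of some $\lambda_G>0$.
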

\begin{proof}
After straightforward calculations, one obtains the Lipschitz property with  
\begin{align}
\lambda_G:=\sqrt{\max\big\{\lambda_F^2,1\big\}+\lambda_F}\enspace.
\end{align}
\end{proof}
\subsection{Contraction estimate}
\label{subsec:stab_ana}
This paragraph is the core of the paper and is devoted to proving that under some conditions, which are made explicit in the sequel, the optimality tracking error $\left\|\bar{w}_k-w^\ast_k\right\|_2$ of Algorithm \ref{algo:mu_conv_al} decreases as the parameter $s$ varies slowly.\\
First, note that given a sub-optimal primal-dual solution $\bar{w}_{k+1}$ and a critical point $w^\ast_{k+1}$, 
\begin{align}
\label{eq:stb_ineq}
\left\|\bar{w}_{k+1}-w^{\ast}_{k+1}\right\|_2\leq&\left\|\bar{w}_{k+1}-w^\infty\left(d_\rho\left(\bar{\mu}_k\right),s_{k+1}\right)\right\|_2\nonumber\\
&+\left\|w^\infty\left(d_\rho\left(\bar{\mu}_k\right),s_{k+1}\right)-w^\ast_{k+1}\right\|_2\enspace,
\end{align}
where $w^\infty\left(d_\rho(\bar{\mu}_k),s_{k+1}\right)$ has been defined in \eqref{eq:def_w_inf}.
The analysis then consists in bounding the two right hand side terms in \eqref{eq:stb_ineq}, for the first term using strong regularity of \eqref{eq:ge_2} and for the second one using the convergence rate of the primal loop in Algorithm \ref{algo:mu_conv_al}. 
\begin{lem}
\label{lem:fir_term}
If $\left\|s_{k+1}-s_k\right\|_2$ satisfies 
\begin{align}
\label{eq:hypo_ds_1}
\left\|s_{k+1}-s_k\right\|_2<\min\left\{r_B,\displaystyle\frac{q_B\rho}{\lambda_A\lambda_F}\right\}\enspace,
\end{align}
and $\left\|\bar{w}_k-w^\ast_k\right\|_2<q_B\rho$, 
\begin{align}
\left\|w^\infty\left(d_\rho\left(\bar{\mu}_k\right),s_{k+1}\right)-w^\ast_{k+1}\right\|_2\leq&\displaystyle\frac{\lambda_B\lambda_G}{\rho}\big(\left\|\bar{w}_k-w^\ast_k\right\|_2\nonumber\\
&+\lambda_A\lambda_F\left\|s_{k+1}-s_k\right\|_2\big)\enspace.
\end{align}
\end{lem}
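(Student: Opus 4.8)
The plan is to view both $w^\infty\left(d_\rho(\bar{\mu}_k),s_{k+1}\right)$ and the true critical point $w^\ast_{k+1}$ as solutions of the \emph{same} auxiliary generalised equation \eqref{eq:ge_2} at $s=s_{k+1}$, differing only in the shift parameter $d$, and then to invoke the Lipschitz estimate of Lemma \ref{lem:str_reg_2}. The decisive observation is a reformulation: I claim that $w^\ast_{k+1}$ is itself a solution of \eqref{eq:ge_2} for the specific shift $d^\ast_{k+1}:=\left(\mu^\ast_{k+1}-\mu^\ast_k\right)/\rho$ and $s=s_{k+1}$. Indeed, since the dual variable is unconstrained the normal cone to $\Rset^m$ is trivial, so the second block of \eqref{eq:ge_2} reduces to the scalar identity $g\left(z^\ast_{k+1},s_{k+1}\right)+d+\left(\mu^\ast_k-\mu^\ast_{k+1}\right)/\rho=0$; because $w^\ast_{k+1}$ solves \eqref{eq:ge_1} at $s_{k+1}$ one has $g\left(z^\ast_{k+1},s_{k+1}\right)=0$, which forces exactly $d=d^\ast_{k+1}$, while the first block of \eqref{eq:ge_2} coincides with that of \eqref{eq:ge_1} and therefore holds automatically. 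Hence $w^\ast_{k+1}=w^\ast\left(d^\ast_{k+1},s_{k+1}\right)$ in the notation of \eqref{eq:ge_2}, whereas $w^\infty\left(d_\rho(\bar{\mu}_k),s_{k+1}\right)=w^\ast\left(d_\rho(\bar{\mu}_k),s_{k+1}\right)$ by \eqref{eq:def_w_inf}.

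Next I would verify that the two hypotheses place both shifts inside $\Bcal\left(0,q_B\right)$ and $s_{k+1}$ inside $\Bcal\left(s_k,r_B\right)$, so that Lemma \ref{lem:str_reg_2} is applicable. The bound $\left\|s_{k+1}-s_k\right\|_2<r_B$ from \eqref{eq:hypo_ds_1} gives the parameter membership. For the shifts, $\left\|d_\rho(\bar{\mu}_k)\right\|_2=\left\|\bar{\mu}_k-\mu^\ast_k\right\|_2/\rho\leq\left\|\bar{w}_k-w^\ast_k\right\|_2/\rho<q_B$ using the hypothesis $\left\|\bar{w}_k-w^\ast_k\right\|_2<q_B\rho$; and $\left\|d^\ast_{k+1}\right\|_2=\left\|\mu^\ast_{k+1}-\mu^\ast_k\right\|_2/\rho$, which I would bound through Lemma \ref{lem:str_reg} and Assumption \ref{ass:glob_lip_F} by $\lambda_A\lambda_F\left\|s_{k+1}-s_k\right\|_2/\rho<q_B$, invoking the second term of the minimum in \eqref{eq:hypo_ds_1}.

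With applicability secured, I would apply the Lipschitz estimate of Lemma \ref{lem:str_reg_2} to the pair $\left(d_\rho(\bar{\mu}_k),s_{k+1}\right)$ and $\left(d^\ast_{k+1},s_{k+1}\right)$. Since the two $s$-arguments coincide, Lemma \ref{lem:glob_lip_G} reduces the right-hand side to $\lambda_B\lambda_G\left\|d_\rho(\bar{\mu}_k)-d^\ast_{k+1}\right\|_2$. Finally I would note that $d_\rho(\bar{\mu}_k)-d^\ast_{k+1}=\left(\bar{\mu}_k-\mu^\ast_{k+1}\right)/\rho$, so a triangle inequality combined with $\left\|\bar{\mu}_k-\mu^\ast_k\right\|_2\leq\left\|\bar{w}_k-w^\ast_k\right\|_2$ and the bound $\left\|\mu^\ast_k-\mu^\ast_{k+1}\right\|_2\leq\lambda_A\lambda_F\left\|s_{k+1}-s_k\right\|_2$ already established above yields precisely the claimed estimate after dividing by $\rho$.

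I expect the main obstacle to be the reformulation step, namely recognising that the genuine critical point $w^\ast_{k+1}$ is a solution of the auxiliary equation \eqref{eq:ge_2} for the shift $d^\ast_{k+1}=\left(\mu^\ast_{k+1}-\mu^\ast_k\right)/\rho$; this is what turns the comparison of solutions of two different generalised equations into a comparison of two solutions of a \emph{single} parametric one, so that strong regularity can be used directly. A secondary technical point is ensuring that $w^\infty\left(d_\rho(\bar{\mu}_k),s_{k+1}\right)$ and $w^\ast_{k+1}$ are indeed the unique in-ball solutions to which Lemma \ref{lem:str_reg_2} refers, i.e. that the radii $\delta_A$ and $\delta_B$ are compatible; once the hypotheses are translated into membership in $\Bcal\left(0,q_B\right)$ and $\Bcal\left(s_k,r_B\right)$, the remaining manipulations are routine triangle inequalities.
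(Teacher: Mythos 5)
Your proposal is correct and follows essentially the same route as the paper's own proof: identify $w^\ast_{k+1}$ as the solution $w^\ast\bigl(d_\rho(\mu^\ast_{k+1}),s_{k+1}\bigr)$ of the auxiliary generalised equation \eqref{eq:ge_2}, verify via Lemma \ref{lem:str_reg}, Assumption \ref{ass:glob_lip_F} and the hypotheses that both shifts lie in $\Bcal\left(0,q_B\right)$, then apply Lemmas \ref{lem:str_reg_2} and \ref{lem:glob_lip_G} and a triangle inequality on the dual differences. The only difference is cosmetic: you spell out why $w^\ast_{k+1}$ solves \eqref{eq:ge_2} with that particular shift (the paper merely asserts it), which is a welcome clarification rather than a deviation.
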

\begin{proof}
Note that $w^\ast_{k+1}$ can be written as $w^\ast_{k+1}=w^\ast\left(d_\rho\left(\mu^\ast_{k+1}\right),s_{k+1}\right)$, which is a solution to \eqref{eq:ge_2} at $s_{k+1}$.
\begin{align}
\big\|d_\rho\big(\mu^\ast_{k+1}\big)\big\|_2=\displaystyle\frac{\left\|\mu^\ast_{k+1}-\mu^\ast_k\right\|_2}{\rho}&\leq\displaystyle\frac{\lambda_F\lambda_A}{\rho}\left\|s_{k+1}-s_k\right\|_2\nonumber\\
&<q_B\enspace,
\end{align}
by applying Lemma \ref{lem:str_reg}, Assumption \ref{ass:glob_lip_F} and from hypothesis \eqref{eq:hypo_ds_1}. Moreover,
\begin{align}
\big\|d_\rho(\bar{\mu}_k)\big\|_2=\displaystyle\frac{\big\|\bar{\mu}_k-\mu^\ast_k\big\|_2}{\rho}&\leq\displaystyle\frac{\big\|\bar{w}_k-w^\ast_k\big\|_2}{\rho}\nonumber\\
&<q_B\enspace.
\end{align}
Now, as $\big\|s_{k+1}-s_k\big\|_2<r_B$ one can apply Lemmas \ref{lem:str_reg_2} and \ref{lem:glob_lip_G} to obtain
\begin{align}
\left\|w^\infty\left(\bar{\mu}_k,s_{k+1}\right)-w^\ast_{k+1}\right\|_2&\leq\lambda_B\lambda_G\left\|d_\rho\left(\bar{\mu}_k\right)-d_\rho\left(\mu^\ast_{k+1}\right)\right\|_2\nonumber\\
\leq\displaystyle\frac{\lambda_B\lambda_G}{\rho}&\left(\left\|\bar{\mu}_k-\mu^\ast_k\right\|_2+\left\|\mu^\ast_{k+1}-\mu^\ast_k\right\|_2\right)\nonumber\\
\leq\displaystyle\frac{\lambda_B\lambda_G}{\rho}\big(\big\|\bar{w}_k-&w^\ast_k\big\|_2+\lambda_A\lambda_F\big\|s_{k+1}-s_k\big\|_2\big)\enspace,
\end{align}
by Lemma \ref{lem:str_reg}.
\end{proof}
In the following Lemma, using the convergence rate estimate presented in Section \ref{sec:theo_tools}, we derive a bound on the first summand $\left\|\bar{w}_{k+1}-w^\infty(d_\rho(\bar{\mu}_k),s_{k+1})\right\|_2$.
\begin{lem}
\label{lem:sec_term}
If $\left\|s_{k+1}-s_k\right\|_2<r_B$, $\left\|\bar{w}_k-w^\ast_k\right\|_2<q_B\rho$ and
\begin{align}
\big(1+\displaystyle\frac{\lambda_G\lambda_B}{\rho}\big)q_B\rho+\lambda_G\lambda_Br_B<\delta\enspace,
\end{align}
then  
\begin{align}
\label{eq:bnd_sec}
\big\|\bar{w}_{k+1}-w^\infty&\left(d_\rho\left(\bar{\mu}_k\right),s_{k+1}\right)\big\|_2\leq\nonumber\\
&C\left(1+{\rho}\lambda_g\right)M^{-\psi\left(\hat{\theta}\right)}\Big(\lambda_B\lambda_G\left\|s_{k+1}-s_k\right\|_2\nonumber\\
&~~~~~~~~+\big\|\bar{w}_k-w^\ast_k\big\|_2\Big(1+\displaystyle\frac{\lambda_B\lambda_G}{\rho}\Big)\Big)\enspace, 
\end{align}
where $\lambda_g>0$ is the Lipschitz constant of $g(\cdot,s)$ on $\Zcal$ (well-defined as $\Zcal$ is bounded). 
\end{lem}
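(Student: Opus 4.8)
The plan is to reduce the full primal-dual error to the purely primal error, invoke the R-convergence rate of Lemma~\ref{lem:bcd_cv_rate}, and then control the warm-start primal error through strong regularity of the auxiliary generalised equation \eqref{eq:ge_2}. Write $w^\infty:=w^\infty\left(d_\rho(\bar{\mu}_k),s_{k+1}\right)$ and $z^\infty:=z^\infty\left(\bar{\mu}_k,s_{k+1}\right)$ for brevity. First I would split $\bar{w}_{k+1}-w^\infty$ into its primal and dual blocks. Since both points carry the same dual structure $\mu=\bar{\mu}_k+\rho g(\cdot,s_{k+1})$, the dual block equals $\rho\big(g(\bar{z}_{k+1},s_{k+1})-g(z^\infty,s_{k+1})\big)$, which the Lipschitz constant $\lambda_g$ of $g(\cdot,s_{k+1})$ bounds by $\rho\lambda_g\|\bar{z}_{k+1}-z^\infty\|_2$. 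Combining the two blocks and using $\sqrt{1+\rho^2\lambda_g^2}\leq 1+\rho\lambda_g$ yields
\begin{align}
\big\|\bar{w}_{k+1}-w^\infty\big\|_2\leq\left(1+\rho\lambda_g\right)\big\|\bar{z}_{k+1}-z^\infty\big\|_2\enspace.
\end{align}

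Next I would control the warm-start primal error. As the primal block is a subvector, $\|\bar{z}_k-z^\infty\|_2\leq\|\bar{w}_k-w^\infty\|_2$, and the triangle inequality through $w^\ast_k$ gives $\|\bar{w}_k-w^\infty\|_2\leq\|\bar{w}_k-w^\ast_k\|_2+\|w^\ast_k-w^\infty\|_2$. The key observation is that $w^\ast_k=w^\ast(0,s_k)$ and $w^\infty=w^\ast\left(d_\rho(\bar{\mu}_k),s_{k+1}\right)$ are both solutions of \eqref{eq:ge_2}. The hypothesis $\|\bar{w}_k-w^\ast_k\|_2<q_B\rho$ yields $\|d_\rho(\bar{\mu}_k)\|_2\leq\|\bar{w}_k-w^\ast_k\|_2/\rho<q_B$, and $\|s_{k+1}-s_k\|_2<r_B$ places $s_{k+1}$ in $\Bcal(s_k,r_B)$, so all domain requirements of Lemma~\ref{lem:str_reg_2} hold. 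Applying Lemmas~\ref{lem:str_reg_2} and~\ref{lem:glob_lip_G} with the pair $(0,s_k)$ and $\left(d_\rho(\bar{\mu}_k),s_{k+1}\right)$ gives $\|w^\ast_k-w^\infty\|_2\leq\lambda_B\lambda_G\big(\|\bar{w}_k-w^\ast_k\|_2/\rho+\|s_{k+1}-s_k\|_2\big)$, whence
\begin{align}
\big\|\bar{z}_k-z^\infty\big\|_2\leq\Big(1+\frac{\lambda_B\lambda_G}{\rho}\Big)\big\|\bar{w}_k-w^\ast_k\big\|_2+\lambda_B\lambda_G\big\|s_{k+1}-s_k\big\|_2\enspace.
\end{align}

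Finally I would apply the rate estimate. To invoke Lemma~\ref{lem:bcd_cv_rate} its ball hypothesis must be checked first: the previous bound together with $\|\bar{w}_k-w^\ast_k\|_2<q_B\rho$ and $\|s_{k+1}-s_k\|_2<r_B$ shows $\|\bar{z}_k-z^\infty\|_2<\big(1+\lambda_G\lambda_B/\rho\big)q_B\rho+\lambda_G\lambda_B r_B<\delta$, which is exactly the third hypothesis and places $\bar{z}_k$ inside the KL radius of $z^\infty$. Lemma~\ref{lem:bcd_cv_rate} then yields $\|\bar{z}_{k+1}-z^\infty\|_2\leq CM^{-\psi(\hat{\theta})}\|\bar{z}_k-z^\infty\|_2$, and chaining the three displayed estimates produces \eqref{eq:bnd_sec}.

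I expect the main obstacle to be the ball-membership step. The R-convergence rate of Lemma~\ref{lem:bcd_cv_rate} is local, valid only within the KL radius $\delta$ of the critical point $z^\infty$, so one must certify that the warm start $\bar{z}_k$ lands inside that radius before the rate can be used. Tracking how the two driving quantities $\|\bar{w}_k-w^\ast_k\|_2$ and $\|s_{k+1}-s_k\|_2$ propagate through the strong-regularity bound into the radius condition, and verifying every domain requirement of Lemma~\ref{lem:str_reg_2} beforehand, is the delicate part; the remaining manipulations, including the block split producing the factor $1+\rho\lambda_g$, are routine.
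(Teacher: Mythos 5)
Your proposal is correct and follows essentially the same route as the paper's proof: the block split yielding the factor $\left(1+\rho\lambda_g\right)$, the warm-start bound $\big\|\bar{z}_k-z^\infty\big\|_2\leq\big(1+\nicefrac{\lambda_B\lambda_G}{\rho}\big)\big\|\bar{w}_k-w^\ast_k\big\|_2+\lambda_B\lambda_G\big\|s_{k+1}-s_k\big\|_2$ obtained from strong regularity of \eqref{eq:ge_2} at $w^\ast(0,s_k)$ together with Lemma \ref{lem:glob_lip_G}, the verification that this quantity is below the KL radius $\delta$, and finally the R-convergence rate of Lemma \ref{lem:bcd_cv_rate}. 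The only cosmetic difference is that you apply the triangle inequality at the primal-dual level and then pass to the primal subvector, whereas the paper triangulates directly on the primal components; the resulting bounds are identical.
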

\begin{proof}
From Algorithm \ref{algo:mu_conv_al}, it follows that
\begin{align}
\big\|\bar{w}_{k+1}-&w^\infty\big(d_\rho\big(\bar{\mu}_k\big),s_{k+1}\big)\big\|_2\leq\nonumber\\
&\left\|\begin{pmatrix}
\bar{z}_{k+1}-z^\infty\left(\bar{\mu}_k,s_{k+1}\right)\\
\rho\left(g\left(\bar{z}_{k+1},s_{k+1}\right)-g\left(z^\infty\left(\bar{\mu}_k,s_{k+1}\right),s_{k+1}\right)\right)
\end{pmatrix}\right\|_2\nonumber\\
&\leq\left(1+{\rho}\lambda_g\right)\left\|\bar{z}_{k+1}-z^\infty\left(\bar{\mu}_k,s_{k+1}\right)\right\|_2\enspace.
\end{align}
In order to apply Lemma \ref{lem:bcd_cv_rate}, one first need to show that $\bar{z}_k$ lies in the ball $\Bcal\big(z^\infty(\bar{\mu}_k,s_{k+1}),\delta\big)$, where $\delta$ is the radius involved in the KL property.
\begin{align}
\label{eq:ineqs_ball}
\big\|\bar{z}_k-z^\infty(\bar{\mu}_k,s_{k+1})\big\|_2\leq&\big\|\bar{z}_k-z^\ast(0,s_k)\big\|_2\nonumber\\
&+\big\|z^\ast(0,s_k)-z^\infty(\bar{\mu}_k,s_{k+1})\big\|_2\nonumber\\
\leq&\big\|\bar{w}_k-w^\ast_k\big\|_2\nonumber\\
+&\lambda_G\lambda_B\big(\left\|d_\rho(\bar{\mu}_k)\right\|_2+\left\|s_{k+1}-s_k\right\|_2\big)\nonumber\\
\leq&\left(1+\displaystyle\frac{\lambda_G\lambda_B}{\rho}\right)\left\|\bar{w}_k-w^\ast_k\right\|_2\nonumber\\
&+\lambda_G\lambda_B\left\|s_{k+1}-s_k\right\|_2<\delta\enspace,
\end{align}
where the second step follows from strong regularity of \eqref{eq:ge_2} at $w^\ast(0,s_k)$ and the hypotheses mentioned above. Thus one can use the R-convergence rate estimate in Lemma \ref{lem:bcd_cv_rate} and apply the inequalities in \eqref{eq:ineqs_ball} to obtain \eqref{eq:bnd_sec}. 
\end{proof}
Gathering the results of Lemmas \ref{lem:fir_term} and \ref{lem:sec_term}, one can formalise the following theorem. 
\begin{theo}[Contraction]
Given a time instant $k$, if the primal-dual error $\left\|\bar{w}_k-w^\ast_k\right\|_2$, the number of primal iterations $M$, the penalty parameter $\rho$ and the parameter difference $\left\|s_{k+1}-s_k\right\|_2$ satisfy
\begin{itemize}
\item $\left\|s_{k+1}-s_k\right\|_2<\min\left\{r_A,r_B,\displaystyle\frac{q_B\rho}{\lambda_A\lambda_F}\right\}\enspace,$
\item $\left\|\bar{w}_k-w^\ast_k\right\|_2<q_B\rho\enspace,$
\item $\rho>\tilde{\rho}\enspace,$
\item\begin{align}
\label{eq:hyp_kl_rad}
\hspace{-0.4cm}\left(1+\displaystyle\frac{\lambda_G\lambda_B}{\rho}\right)\left\|\bar{w}_k-w^\ast_k\right\|_2+\lambda_G\lambda_B\left\|s_{k+1}-s_k\right\|_2<\delta\enspace,
\end{align}  
\end{itemize}
then
\begin{align}
\label{eq:wk_con}
\left\|\bar{w}_{k+1}-w^\ast_{k+1}\right\|_2\leq&\beta_w\left(\rho,M\right)\left\|\bar{w}_k-w^\ast_k\right\|_2\nonumber\\
&~~~~~~~~~~~~+\beta_s\left(\rho,M\right)\left\|s_{k+1}-s_k\right\|_2\enspace,
\end{align} where
\begin{align}
\label{eq:beta_w}
\beta_w\left(\rho,M\right):=&~C\left(1+\rho\lambda_g\right)\left(1+\displaystyle\frac{\lambda_B\lambda_G}{\rho}\right)M^{-\psi\left(\hat{\theta}\right)}+\displaystyle\frac{\lambda_B\lambda_G}{\rho}\enspace,
\end{align} and
\begin{align}
\label{eq:beta_s}
\beta_s\left(\rho,M\right):=&~C\left(1+\rho\lambda_g\right)\lambda_B\lambda_GM^{-\psi\left(\hat{\theta}\right)}+\displaystyle\frac{\lambda_B\lambda_G\lambda_A\lambda_F}{\rho}\enspace.
\end{align}
\end{theo}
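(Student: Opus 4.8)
The plan is to combine Lemmas~\ref{lem:fir_term} and~\ref{lem:sec_term} through the triangle inequality~\eqref{eq:stb_ineq}, so that no fresh estimate is required: the work reduces to verifying that the four bulleted hypotheses of the theorem entail the premises of both lemmas, and then to gathering coefficients. First I would note that the first bullet, $\left\|s_{k+1}-s_k\right\|_2<\min\left\{r_A,r_B,\nicefrac{q_B\rho}{\lambda_A\lambda_F}\right\}$, simultaneously subsumes the parameter-difference premise $\left\|s_{k+1}-s_k\right\|_2<\min\{r_B,\nicefrac{q_B\rho}{\lambda_A\lambda_F}\}$ of Lemma~\ref{lem:fir_term} and the premise $\left\|s_{k+1}-s_k\right\|_2<r_B$ of Lemma~\ref{lem:sec_term}, while the extra term $r_A$ activates the existence-uniqueness Lemma of Section~\ref{subsec:ex_uni}, guaranteeing that $w^\ast_{k+1}$ is well defined. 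The second bullet $\left\|\bar{w}_k-w^\ast_k\right\|_2<q_B\rho$ is exactly the primal-dual error premise shared by both lemmas, and the third bullet $\rho>\tilde{\rho}$ is Assumption~\ref{ass:rho}, which secures strong regularity of~\eqref{eq:ge_2}.

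The only premise needing attention is the fourth one,~\eqref{eq:hyp_kl_rad}. In the statement of Lemma~\ref{lem:sec_term} the Kurdyka-Lojasiewicz radius requirement is phrased uniformly as $\big(1+\nicefrac{\lambda_G\lambda_B}{\rho}\big)q_B\rho+\lambda_G\lambda_B r_B<\delta$, yet its proof only ever invokes the sharper, iterate-dependent chain~\eqref{eq:ineqs_ball}, namely $\big(1+\nicefrac{\lambda_G\lambda_B}{\rho}\big)\left\|\bar{w}_k-w^\ast_k\right\|_2+\lambda_G\lambda_B\left\|s_{k+1}-s_k\right\|_2<\delta$. Since~\eqref{eq:hyp_kl_rad} is precisely this per-step inequality, the conclusion~\eqref{eq:bnd_sec} of Lemma~\ref{lem:sec_term} stays valid under the theorem's weaker assumption; I would make this reading explicit before applying the lemma.

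With every premise in force, I would substitute the Lemma~\ref{lem:fir_term} bound for $\left\|w^\infty(d_\rho(\bar{\mu}_k),s_{k+1})-w^\ast_{k+1}\right\|_2$ and the bound~\eqref{eq:bnd_sec} of Lemma~\ref{lem:sec_term} for $\left\|\bar{w}_{k+1}-w^\infty(d_\rho(\bar{\mu}_k),s_{k+1})\right\|_2$ into~\eqref{eq:stb_ineq}, add the two right-hand sides, and regroup by the two driving quantities. The coefficient of $\left\|\bar{w}_k-w^\ast_k\right\|_2$ then collects $\nicefrac{\lambda_B\lambda_G}{\rho}$ from Lemma~\ref{lem:fir_term} and $C(1+\rho\lambda_g)\big(1+\nicefrac{\lambda_B\lambda_G}{\rho}\big)M^{-\psi(\hat{\theta})}$ from Lemma~\ref{lem:sec_term}, reproducing $\beta_w(\rho,M)$ of~\eqref{eq:beta_w}, while the coefficient of $\left\|s_{k+1}-s_k\right\|_2$ collects $\nicefrac{\lambda_B\lambda_G\lambda_A\lambda_F}{\rho}$ from Lemma~\ref{lem:fir_term} and $C(1+\rho\lambda_g)\lambda_B\lambda_G M^{-\psi(\hat{\theta})}$ from Lemma~\ref{lem:sec_term}, reproducing $\beta_s(\rho,M)$ of~\eqref{eq:beta_s}; this yields exactly~\eqref{eq:wk_con}. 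The step demanding the most care is not any computation but the bookkeeping point above: recognising that~\eqref{eq:hyp_kl_rad} is the genuine, iterate-dependent form of the radius hypothesis actually used inside Lemma~\ref{lem:sec_term}, so that the contraction holds under the stated per-step condition rather than the more conservative uniform one written in that lemma.
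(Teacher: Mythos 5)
Your proposal is correct and follows exactly the paper's route: the paper's own proof is literally the one-line observation that the theorem follows by combining Lemmas~\ref{lem:fir_term} and~\ref{lem:sec_term} through the triangle inequality~\eqref{eq:stb_ineq}, which is precisely what you do, with the coefficients regrouping into $\beta_w$ and $\beta_s$ as you describe. Your bookkeeping point about the fourth hypothesis is a genuine subtlety the paper glosses over --- the per-step condition~\eqref{eq:hyp_kl_rad} does not imply the uniform radius condition stated in Lemma~\ref{lem:sec_term}, but as you correctly observe, that lemma's proof only uses the iterate-dependent chain~\eqref{eq:ineqs_ball}, so the bound~\eqref{eq:bnd_sec} remains valid under~\eqref{eq:hyp_kl_rad} and the combination goes through.
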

\begin{proof}
This is a direct consequence of Lemmas \ref{lem:fir_term} and \ref{lem:sec_term}.
\end{proof}
\begin{rk}
Note that the last hypothesis \eqref{eq:hyp_kl_rad} may be quite restrictive, since $\left\|\bar{w}_k-w^\ast_k\right\|_2$ needs to be small enough for it to be satisfied. However, in many cases the radius $\delta$ is large ($+\infty$ for strongly convex functions).
\end{rk}
In order to ensure stability of the sequence of sub-optimal iterates $\bar{w}_k$, the parameter difference $\left\|s_{k+1}-s_k\right\|_2$ has to be small enough and the coefficient $\beta_w\left(\rho,M\right)$ needs  to be strictly less than $1$. This last requirement is clearly satisfied if $\rho$ is large enough to make $\displaystyle\nicefrac{\lambda_B\lambda_G}{\rho}$ small in \eqref{eq:beta_w}. Yet $\rho$ also appears in $1+\rho\lambda_g$. Hence it needs to be balanced by a large enough number of primal iterations $M$ in order to make the first summand in \eqref{eq:beta_w} small. The same analysis applies to the second coefficient $\beta_s\left(\rho,M\right)$ in order to mitigate the effect of the parameter difference $\left\|s_{k+1}-s_k\right\|_2$.
\begin{cor}[Boundedness of the error sequence]
Assume that $\rho$ and $M$ have been chosen so that $\beta_w\left(\rho,M\right)$ and $\beta_s\left(\rho,M\right)$ are strictly less than $1$, and $\rho>\tilde{\rho}$. Let $r_w>0$ such that $\delta-\big(1+\displaystyle\nicefrac{\lambda_G\lambda_B}{\rho}\big)r_w>0$ and $r_w<q_B\rho$. Let $r_s>0$ such that $r_s<\displaystyle\nicefrac{(1-\beta_w(\rho,M))r_w}{\beta_s(\rho,M)}$.\\
If $\left\|\bar{w}_0-w^\ast_0\right\|_2<r_w$ and for all $k\geq0$,
\begin{align}
\label{eq:hyp_prm_diff}
\left\|s_{k+1}-s_k\right\|_2\leq\min\left\{r_s,r_A,r_B,\displaystyle\frac{q_B\rho}{\lambda_A\lambda_F}\right\}\enspace,
\end{align}
then for all $k\geq0$, the error sequence satisfies 
\begin{align}
\left\|\bar{w}_k-w^\ast_k\right\|_2<r_w\enspace.
\end{align}
\end{cor}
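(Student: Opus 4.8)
The plan is to argue by induction on $k$, using the contraction estimate \eqref{eq:wk_con} as the one-step engine and the defining inequality for $r_s$ to absorb the parameter-drift term. The base case is immediate, since $\|\bar{w}_0-w^\ast_0\|_2<r_w$ holds by hypothesis. For the inductive step I would assume $\|\bar{w}_k-w^\ast_k\|_2<r_w$ and aim to prove $\|\bar{w}_{k+1}-w^\ast_{k+1}\|_2<r_w$.

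First I would verify that the four hypotheses of the Contraction Theorem hold at index $k$, so that \eqref{eq:wk_con} may legitimately be invoked. The bound \eqref{eq:hyp_prm_diff} immediately gives $\|s_{k+1}-s_k\|_2<\min\{r_A,r_B,\frac{q_B\rho}{\lambda_A\lambda_F}\}$. The inductive hypothesis together with $r_w<q_B\rho$ yields $\|\bar{w}_k-w^\ast_k\|_2<r_w<q_B\rho$. The condition $\rho>\tilde{\rho}$ is assumed. Finally, the KL-radius condition \eqref{eq:hyp_kl_rad} follows from the inductive hypothesis and the defining property $\delta-(1+\lambda_G\lambda_B/\rho)r_w>0$ of $r_w$: bounding $\|\bar{w}_k-w^\ast_k\|_2<r_w$ and $\|s_{k+1}-s_k\|_2\leq r_s$, one sees that \eqref{eq:hyp_kl_rad} holds as long as $r_s$ is taken small enough that the extra $\lambda_G\lambda_B\|s_{k+1}-s_k\|_2$ contribution does not consume the slack $\delta-(1+\lambda_G\lambda_B/\rho)r_w$.

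With every hypothesis in place, applying \eqref{eq:wk_con} and then inserting the bounds $\|\bar{w}_k-w^\ast_k\|_2<r_w$ and $\|s_{k+1}-s_k\|_2\leq r_s$ gives
\begin{align}
\left\|\bar{w}_{k+1}-w^\ast_{k+1}\right\|_2<\beta_w\left(\rho,M\right)r_w+\beta_s\left(\rho,M\right)r_s\enspace.
\end{align}
Since $\beta_w(\rho,M)<1$ and $\beta_s(\rho,M)>0$, the choice $r_s<(1-\beta_w(\rho,M))r_w/\beta_s(\rho,M)$ forces $\beta_s(\rho,M)r_s<(1-\beta_w(\rho,M))r_w$, whence $\beta_w(\rho,M)r_w+\beta_s(\rho,M)r_s<r_w$. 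This delivers $\|\bar{w}_{k+1}-w^\ast_{k+1}\|_2<r_w$ and closes the induction.

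The conceptual work has already been absorbed into the Contraction Theorem, so the argument here is essentially bookkeeping; the main point of care is to confirm that the inductive hypothesis $\|\bar{w}_k-w^\ast_k\|_2<r_w$ is \emph{exactly} strong enough to re-validate all four hypotheses of that theorem at each step, so the contraction can be reapplied indefinitely. The most delicate of these is the KL-radius condition \eqref{eq:hyp_kl_rad}, which couples the current error with the parameter drift; the built-in margin $\delta-(1+\lambda_G\lambda_B/\rho)r_w>0$ in the choice of $r_w$ is precisely what leaves room for the drift term, and this is where I would make the implicit smallness requirement on $r_s$ explicit so that the radius condition is preserved for every $k$.
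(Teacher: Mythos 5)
Your proof is correct and follows essentially the same route as the paper's: induction on $k$, re-verification of the four hypotheses of the contraction theorem at each step (including existence of $w^\ast_{k+1}$ via the parameter bound $r_A$), and the defining bound on $r_s$ to absorb the drift term and close the induction. Your remark that the KL-radius condition \eqref{eq:hyp_kl_rad} needs an explicit additional smallness requirement on $r_s$ (so that $\lambda_G\lambda_B r_s$ fits within the slack $\delta-(1+\nicefrac{\lambda_G\lambda_B}{\rho})r_w$) is a justified refinement: the paper's own proof merely asserts that this condition holds ``from the choice of $r_w$ and $r_s$'' without making that requirement explicit.
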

\begin{proof}
The proof proceeds by a straightforward induction. At $k=0$, $\left\|\bar{w}_0-w^\ast_0\right\|_2<r_w$, by assumption. Let $k\geq0$ and assume that $\left\|\bar{w}_k-w^\ast_k\right\|_2<r_w$. As $\left\|s_{k+1}-s_k\right\|_2<r_A$, by applying Lemma \ref{lem:str_reg}, there exists a unique $w^\ast_{k+1}\in\Bcal\left(w^\ast_k,\delta_A\right)$, which satisfies \eqref{eq:ge_1}. As $\left\|s_{k+1}-s_k\right\|_2$ satisfies \eqref{eq:hyp_prm_diff}, $\left\|\bar{w}_k-w^\ast_k\right\|_2<q_B\rho$, $\rho>\tilde{\rho}$ and \eqref{eq:hyp_kl_rad} is satisfied, from the choice of $r_w$ and $r_s$, we have
\begin{align}
\left\|\bar{w}_{k+1}-w^\ast_{k+1}\right\|_2&\leq\beta_w\left(\rho,M\right)\big\|\bar{w}_k-w^\ast_k\big\|_2\nonumber\\
&~~~~~~~~~~~~~~~~+\beta_s\left(\rho,M\right)\left\|s_{k+1}-s_k\right\|_2\nonumber\\
&\leq\beta_w\left(\rho,M\right)r_w+\beta_s\left(\rho,M\right)\left\|s_{k+1}-s_k\right\|_2\nonumber\\
&\leq r_w\enspace,
\end{align}
as $\left\|s_{k+1}-s_k\right\|_2\leq r_s<\displaystyle\nicefrac{(1-\beta_w(\rho,M))r_w}{\beta_s(\rho,M)}$. Note from the choice of $r_w$ and $r_s$, the condition \eqref{eq:hyp_kl_rad} guaranteeing the weak contraction \eqref{eq:wk_con} is also recursively satisfied.
\end{proof}
In the remainder, we show that such a tuning of $\rho$, $M$ and $\left\|s_{k+1}-s_k\right\|_2$, which ensures stability of the error sequence, is actually possible on a realistic numerical example and that good tracking performance can be achieved.
\section{Application to real-time NMPC}
\label{sec:app_rt_nmpc}
\subsection{Computational aspects}
\label{subsec:comp_asp}
Algorithm \ref{algo:mu_conv_al} allows one to address a more general class of problems than in \cite{zav2010}, where the QP sub-problem is assumed to have non-negativity constraints only. On the contrary, our framework can handle any convex constraint set $\Zcal_i$ for which the proximal operator can be easily computed, that is 
\begin{align}
\prox^{\iota_\Zcal}_\alpha\left(x\right):=\argm_y\iota_\Zcal\left(y\right)+\frac{\alpha}{2}\left\|y-x\right\|^2_2
\end{align}
is cheap to evaluate. This is the case when the constraint set $\Zcal_i$ is a ball, an ellipsoid, a box, the positive orthant or even second order conic constraints and semidefinite constraints. 
\begin{rk}
There are also many examples  of non-convex constraint sets for which the proximal operator is easily computable, such as mixed integer sets. However, the analysis would not be valid anymore, as the convexity assumption is required in the strong regularity framework \cite{robin1980}.
\end{rk}
By introducing extra variables $y_i$, the nonlinear program \eqref{eq:mu_conv_pb} can be rewritten
\begin{align}
\label{eq:mu_conv_pb_2}
&\minimise~f(y_1,\ldots,y_P,s_t)\\
&\text{s.t.}~~~~g(y_1,\ldots,y_P,s_t)=0\nonumber\\
&~~~~y_i-z_i=0,~\forall i\in\left\{1,\ldots,P\right\}\nonumber\\
&~~~~z_i\in\Zcal_i,~\forall i\in\left\{1,\ldots,P\right\}\enspace.\nonumber
\end{align}
As a result, after defining 
\begin{align}
&S_{\rho}\big(y_1,\ldots,y_P,z_1,\ldots,z_P,\mu,\nu_1,\ldots,\nu_P,s\big):=\nonumber\\
&~~~~f\big(y_1,\ldots,y_P,s\big)+\mu^{\Trans}g\big(y_1,\ldots,y_P,s\big)\nonumber\\
&+\displaystyle\frac{\rho}{2}\left\|g\big(y_1,\ldots,y_P,s\big)\right\|_2^2+\sum_{i=1}^m\nu_i^{\Trans}\big(y_i-z_i\big)+\displaystyle\frac{\rho}{2}\left\|y_i-z_i\right\|_2^2\enspace,
\end{align}
the primal alternations of Algorithm \ref{algo:mu_conv_al} consist in two sorts of steps:
\begin{align}
\label{eq:lin_sys_stp}
\minimise_{y_i\in\Rset^{n_i}}S_{\rho}\big(&y_1^{(l+1)},\ldots,y_{i-1}^{(l+1)},y_i,y_{i+1}^{(l)},\ldots,y_P^{(l)},\nonumber\\
                                                                         &z_1^{(l)},\ldots,z_P^{(l)},\mu,\nu_1,\ldots,\nu_P\big)+\displaystyle\frac{\alpha_i}{2}\left\|y_i-y_i^{(l)}\right\|_2^2\enspace,
\end{align}
which is an unconstrained QP, if $f$ is quadratic, and can therefore be solved in closed-form, and
\begin{align}
\minimise_{z_i\in\Zcal_i}\nu_i^{\Trans}\left(y_i^{(l+1)}-z_i\right)&+\displaystyle\frac{\rho}{2}\left\|y_i^{(l+1)}-z_i\right\|_2^2\nonumber\\
&~~~~~~~+\displaystyle\frac{\alpha_i}{2}\left\|z_i-z_i^{(l)}\right\|_2^2\enspace,
\end{align}
which can be rewritten
\begin{align}
\label{eq:proj_stp}
\minimise_{z_i\in\Zcal_i}\left\|z_i-\displaystyle\frac{1}{\alpha_i+\rho}\left(\alpha_iz_i^{(l)}+{\rho}y_i^{(l+1)}+\nu_i\right)\right\|_2\enspace,
\end{align}
and thus corresponds to projecting 
\begin{align}
\displaystyle\frac{1}{\alpha_i+\rho}\left(\alpha_iz_i^{(l)}+{\rho}y_i^{(l+1)}+\nu_i\right)
\end{align} 
onto $\Zcal_i$. The solution can be obtained in closed-form in many cases, as previously mentioned.
\subsection{A real-time NMPC scheme for bilinear models}
\label{subsec:nmpc_bilin}
Bilinear models encompass a large variety of physical processes \cite{bruni1974} and allow one to capture phenomena, which would be difficult to represent via linear models, while remaining relatively simple. For instance, many examples in Power Systems are bilinear systems, in which the control variable has a multiplicative effect on the state \cite{moh1996}.\\
Therefore, we consider discrete-time constrained bilinear models in the following form:
\begin{align}
\label{eq:bilin_mod_dis}
&x_{l+1}=Ax_l+Bu_l+\sum_{i=1}^{m}u_l^{(i)}N_ix_l\nonumber\\
&\underline{x}\leq x_l\leq\overline{x},~\underline{u}\leq u_l\leq\overline{u}\enspace,
\end{align}
where $x_l\in\Rset^n$ and $u_l\in\Rset^m$. An NMPC problem for \eqref{eq:bilin_mod_dis} can be formalised as
\begin{align}
\label{eq:mpc_bilin}
&\minimise~\sum_{l=0}^{N-1}L(x_k,u_k)+L_f(x_N)\\
&\text{s.t.}~x_0=\hat{x}_0\enspace,\nonumber\\
&~~~~x_{l+1}=Ax_l+Bu_l+\sum_{i=1}^mu_l^{(i)}N_ix_l\enspace,\nonumber\\
&~~~~\underline{x}\leq x_l\leq\overline{x},~\underline{u}\leq u_l\leq\overline{u},~l\in\left\{0,\ldots,N-1\right\},\nonumber\\
&~~~~x_N\in\Xcal\enspace,\nonumber
\end{align}
where $L\left(\cdot,\cdot\right)$ is a quadratic stage-cost, $L_f\left(\cdot\right)$ is an appropriate (convex) terminal weight and the terminal constraint set $\Xcal$ is assumed to be a box. By introducing extra-variables the two main steps of Algorithm \ref{algo:mu_conv_al} are similar to \eqref{eq:lin_sys_stp}, which corresponds to linear system solving, and \eqref{eq:proj_stp}, which consists in clipping onto boxes in the case of \eqref{eq:mpc_bilin}. Thus, in the case of bilinear systems, the real-time implementation of Algorithm \ref{algo:mu_conv_al} consists in a fixed amount of very simple operations, which can be easily parallelised.
\section{Numerical example}
\label{sec:num_ex}
The efficacy of Algorithm \ref{algo:mu_conv_al} is demonstrated at controlling a simple bilinear system, namely a DC motor. The discrete-time dynamics are 
\begin{align}
x_{l+1}=A_dx_l+B_dx_lu_l+c_d\enspace,
\end{align}
where 
\begin{align}
&A_d:=\begin{pmatrix}
1-\frac{R_a{\Delta}t}{L_a} & 0 \\
0 & 1-\frac{B{\Delta}t}{J}
\end{pmatrix},~B_d:=\begin{pmatrix}
0 & -\frac{k_m{\Delta}t}{L_a} \\
\frac{k_m{\Delta}t}{J} & 0
\end{pmatrix}\enspace,\nonumber\\
&~~~~~~~~~~~~~~~~~~~~~~~~c_d:={\Delta}t\begin{pmatrix}
\frac{u_a}{L_a}\\ -\frac{\tau_l}{J} 
\end{pmatrix}\enspace,
\end{align}
with ${\Delta}t$ the sampling period and the parameters values, taken from \cite{dan1998}:
\begin{align}
&L_a=0.307~\text{H},~R_a=12.548~\Omega,~k_m=0.22567~\nicefrac[]{\text{Nm}}{\text{A}^2}\enspace,\nonumber\\
&J=0.00385~\text{Nm.sec}^2,~B=0.00783~\text{Nm.sec}\enspace,\nonumber\\
&\tau_l=1.47~\text{Nm},~u_a=60~\text{V}\enspace.
\end{align}
In the state variable, $x_k(1)$ is the armature current, while $x_k(2)$ is the angular speed. The control input is the field current of the machine. The control objective is to make the angular speed track a piecewise constant reference $\pm2~\nicefrac[\text]{rad}{sec}$, while satisfying the following state and input constraints:
\begin{align}
&\underline{x}=\begin{pmatrix}
-2~\text{A}\\ -8~\text{rad/sec}
\end{pmatrix},~\overline{x}=\begin{pmatrix}
5~\text{A}\\ 1.5~\text{rad/sec}
\end{pmatrix}\enspace,\nonumber\\
&\underline{u}=1.27~\text{A},~\overline{u}=1.4~\text{A}\enspace.
\end{align}
The NMPC problem \eqref{eq:mpc_bilin} is solved via Algorithm \ref{algo:mu_conv_al}. In order to assess its performance, our tracking algorithm is tested for different sampling periods, while initialised at a perturbed solution $5\cdot w^{\ast}_0$, where the primal-dual optimal solution $w^\ast_0$ has been computed using \textsc{ipopt} \cite{waech2006}.
\begin{figure}[h!]
\begin{center}
\includegraphics[scale=0.12]{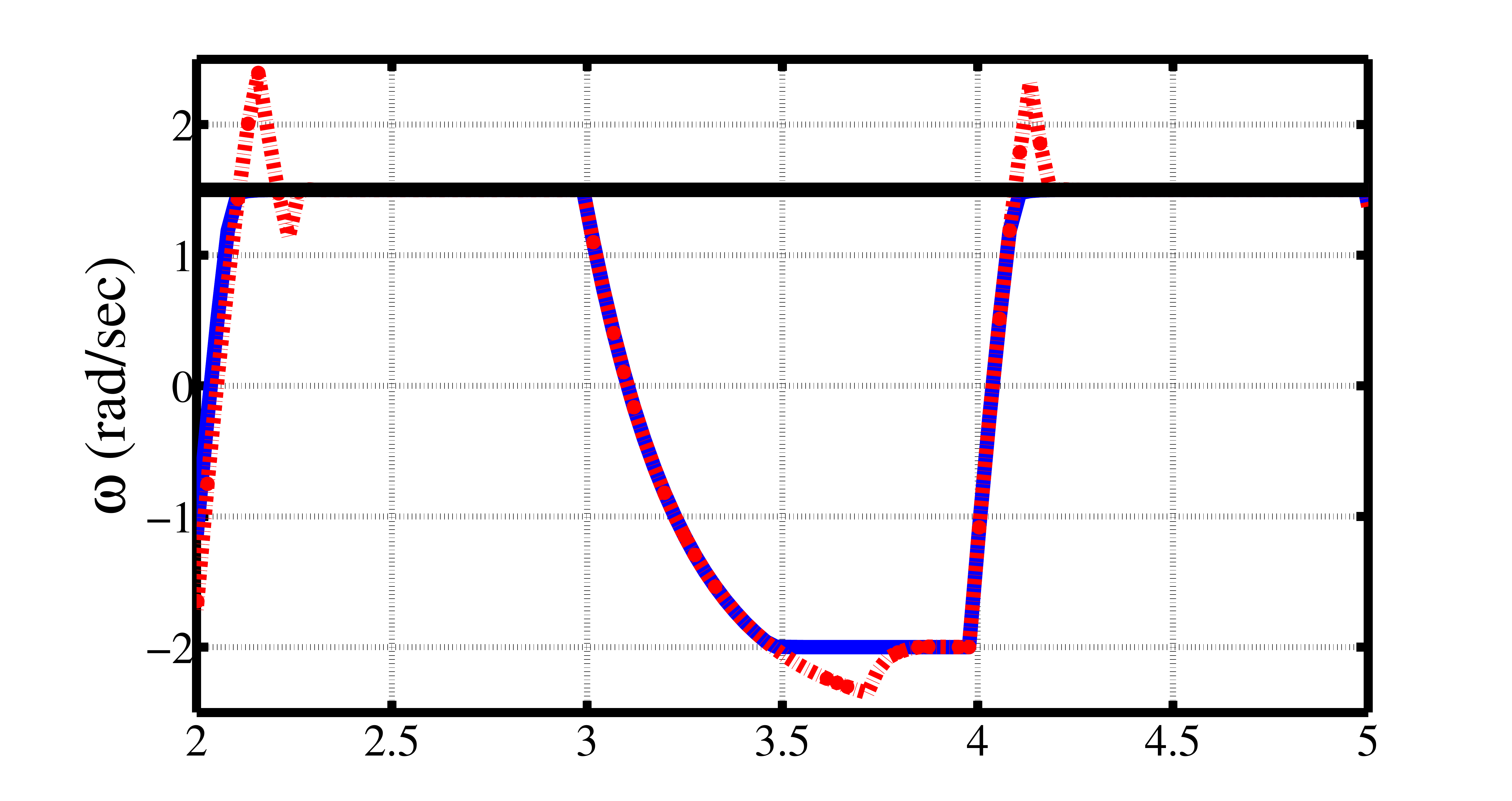}\\ 
\vspace{-0.2cm}
\includegraphics[scale=0.12]{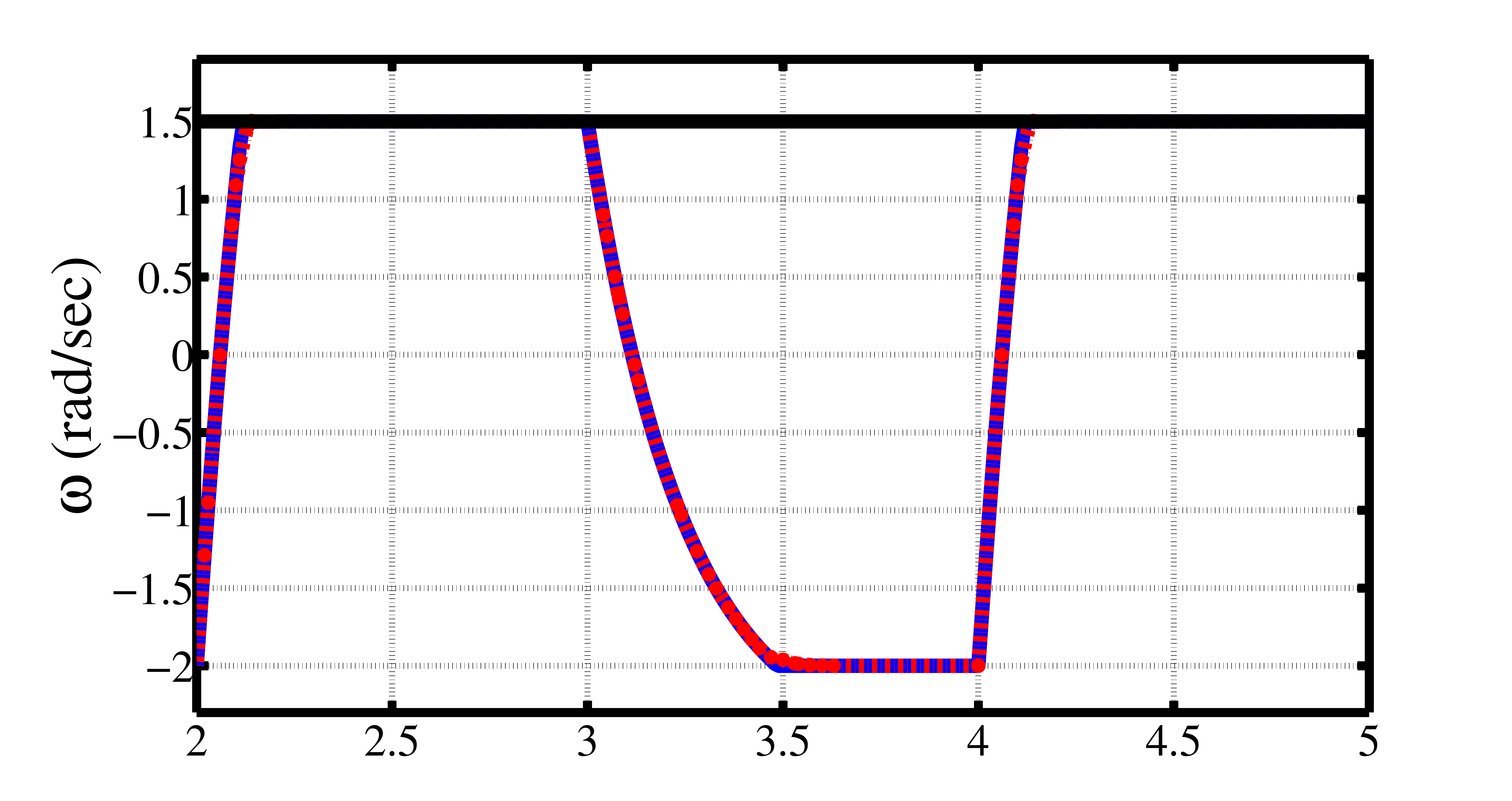}
\put(-110,-5){\footnotesize Time (s)}
\end{center}
\caption{\label{fig:speed_tracks}Speed responses for ${\Delta}t=0.026~\text{sec}$ (top) and ${\Delta}t=0.01~\text{sec}$ (bottom): full NMPC solved using \textsc{ipopt} in blue, using Algorithm \ref{algo:mu_conv_al} in dashed red.}
\end{figure}

The speed trajectories are plotted in Fig. \ref{fig:speed_tracks} and the input in Fig. \ref{fig:inputs}. It clearly appears that as the sampling period is low, the tracking performance is better, the full NMPC trajectory and the sub-optimal one are almost the same. For a larger sampling period, the state constraints may be violated, as illustrated in Fig. \ref{fig:speed_tracks}, while the input constraints are always satisfied, as shown on Fig. \ref{fig:inputs}, due to the formulation of Algorithm \ref{algo:mu_conv_al}. 
\begin{figure}[h!]
\begin{center}
\includegraphics[scale=0.12]{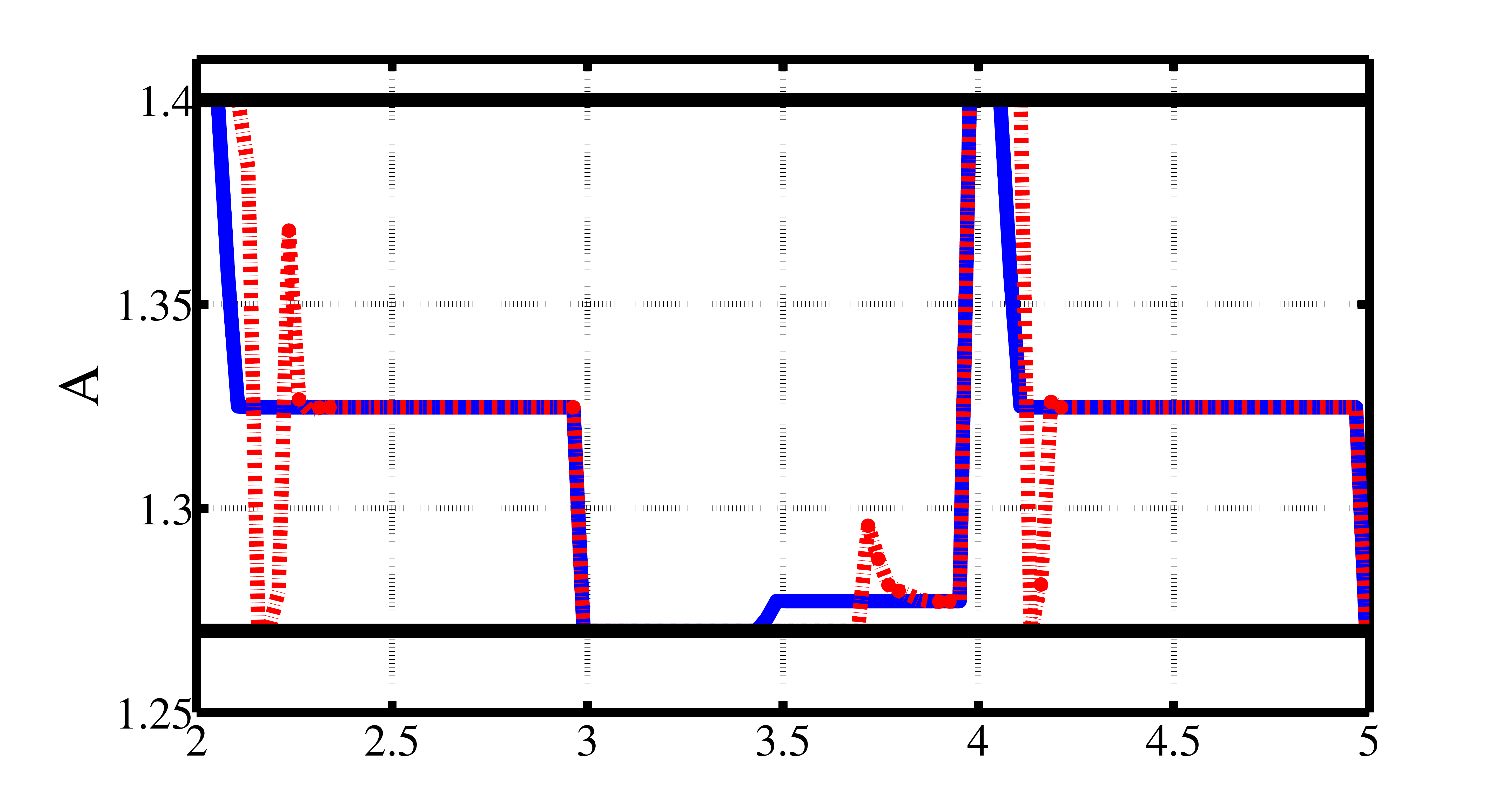}\\ 
\vspace{-0.2cm}
\includegraphics[scale=0.12]{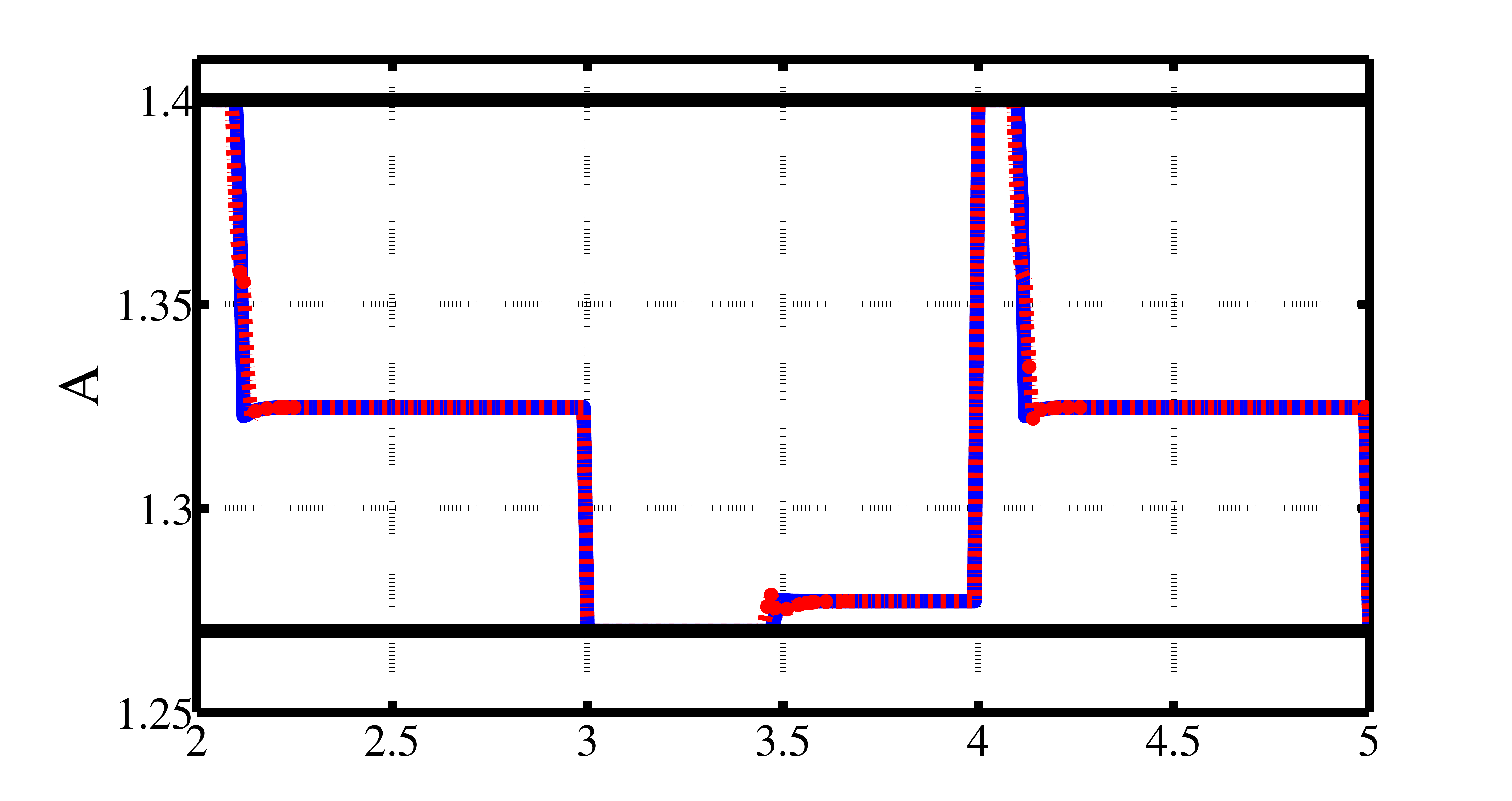}
\put(-110,-5){\footnotesize Time (s)}
\end{center}
\caption{\label{fig:inputs}Input for ${\Delta}t=0.026~\text{sec}$ (top) and ${\Delta}t=0.01~\text{sec}$ (bottom): full NMPC solved using \textsc{ipopt} in blue, using Algorithm \ref{algo:mu_conv_al} in dashed red.}
\end{figure}

The tracking algorithm converges to a feasible solution at a speed depending on the sampling period, as shown in Fig. \ref{fig:feas}. Given a fixed sampling period, increasing the penalty parameter $\rho$ may improve the tracking performance, as a larger penalty mitigates the effect of the error on the dual variables.  

\begin{figure}[h!]
\begin{center}
\includegraphics[scale=0.12]{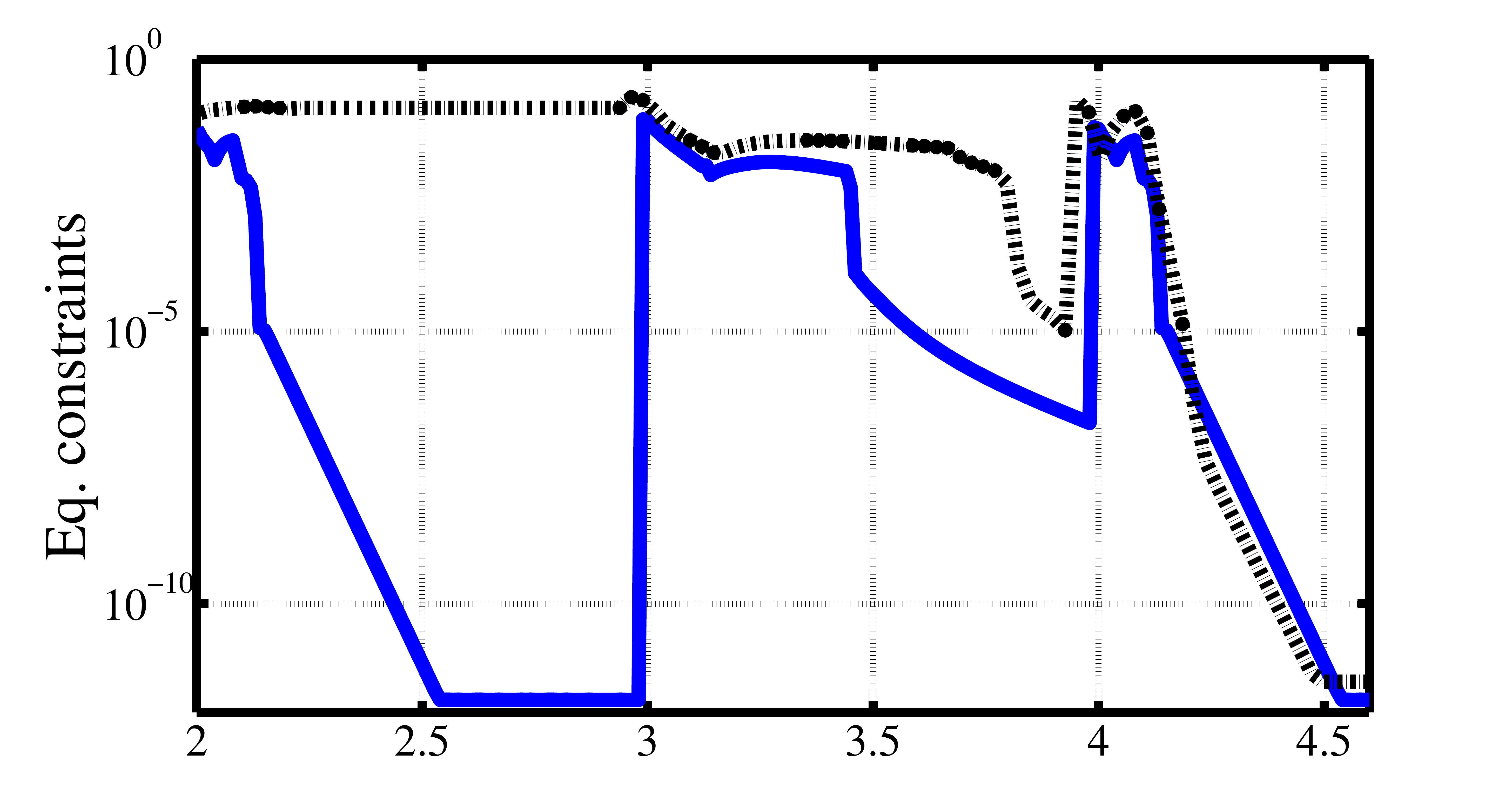}
\put(-110,-5){\footnotesize Time (s)}
\end{center}
\caption{\label{fig:feas}Feasibility of bilinear equality constraints, for ${\Delta}t=0.01~\text{sec}$ in blue and ${\Delta}t=0.026~\text{sec}$ in dashed black.}
\end{figure}
Finally, the computational power is fixed artificially, that is a maximum number of iterations per second is given a priori. Then the sampling period is made vary within a fixed range and the performance of Algorithm \ref{algo:mu_conv_al} is measured using the normalised L$2$-norm of the difference between the full NMPC trajectory and the sub-optimal one obtained by tracking at the given time period. 
\begin{figure}[h!]
\begin{center}
\includegraphics[scale=0.1]{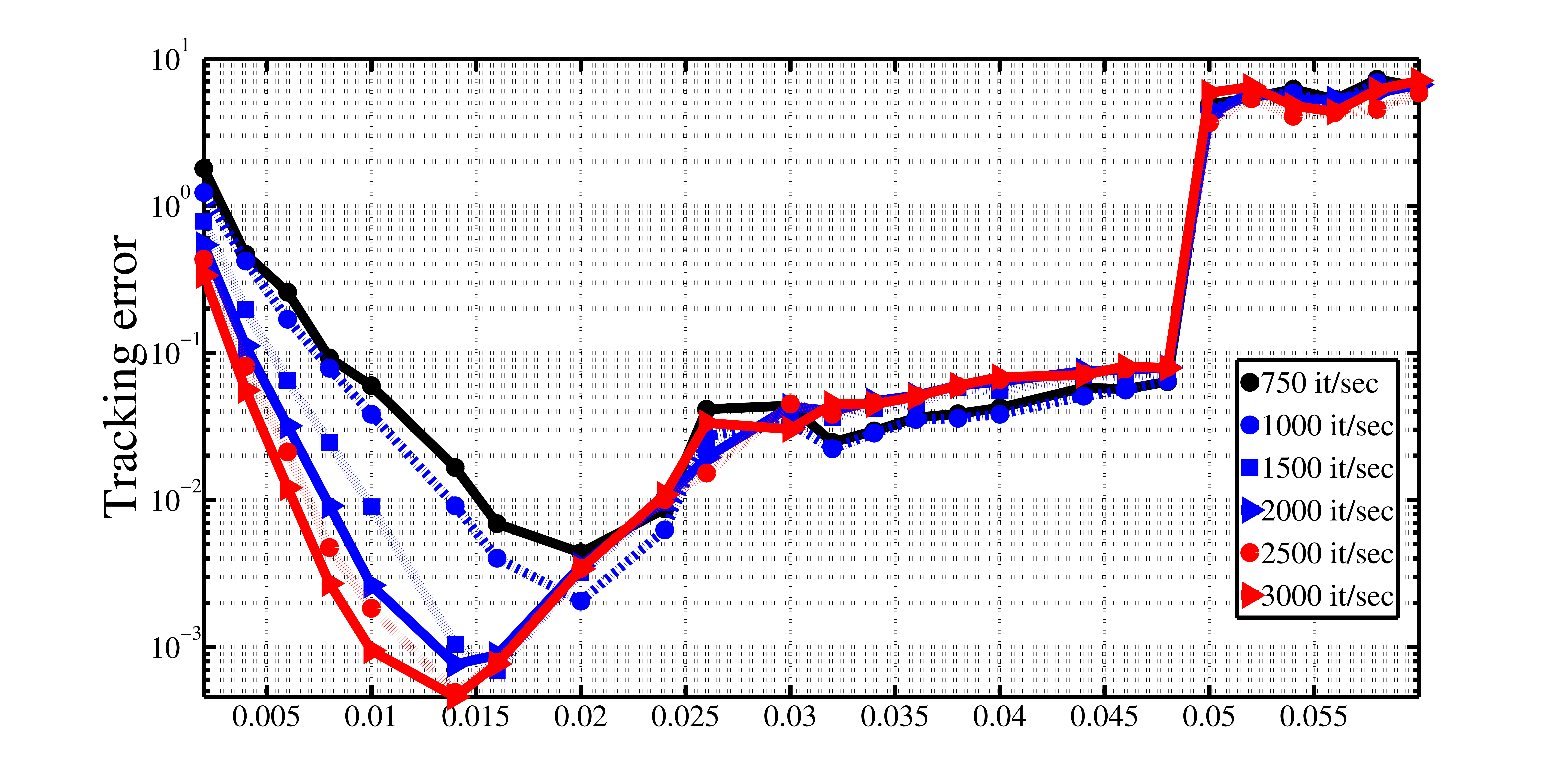}
\put(-140,-5){\footnotesize$\Delta t$~(s)}
\end{center}
\caption{\label{fig:paretos}Evolution of the tracking error (normalised L$2$-norm) versus sampling period for different computational powers.}
\end{figure}
As the sampling period increases, more iterations are allowed, so the tracking error decreases, as pictured on Fig. \ref{fig:paretos}. If the sampling period is too large, the warm-start is too far from the optimal solution and increasing the number of iterations cannot help reducing the error, as only one dual update is performed at each time step. As a result, the tracking error explodes for large sampling periods.   
\section{Conclusion}
\label{sec:con}
A parametric splitting technique has been presented in order to solve time-dependent multi-convex parametric problems. A contraction estimate has been derived, which guarantees boundedness of the error sequence assuming the parameter difference is small enough. Finally, efficacy of our approach has been assessed on a realistic example consisting in speed control of a DC motor using NMPC. Our algorithm seems to be well-adapted to parallel computational environments and can be further extended to solve distributed NMPC problems in a real-time framework.
\bibliographystyle{plain}	
\bibliography{biblio}	
\end{document}